\newtheorem{thm}{Theorem}[section]
\newtheorem{lem}[thm]{Lemma}
\newtheorem{prop}[thm]{Proposition}
\newtheorem{cor}[thm]{Corollary}
\theoremstyle{definition}
\newtheorem*{rem*}{Remark}
\newcommand{\defbold}{\textbf}
\newcommand{\inv}{^{-1}}
\newcommand{\CC}{\mathrm{C}}
\newcommand{\N}{\mathrm{N}}
\newcommand{\Z}{\mathrm{Z}}
\newcommand{\ZZ}{\mathbf{Z}}
\newcommand{\tdlc}{t.d.l.c.\@\xspace}
\newcommand{\con}{\mathrm{con}}
\newcommand{\nub}{\mathrm{nub}}
\newcommand{\prb}{\mathrm{par}}
\newcommand{\bd}{\partial}
\newcommand{\Aut}{\mathrm{Aut}}
\newcommand{\bN}{\mathbf{N}}
\newcommand{\bZ}{\mathbf{Z}}
\newcommand{\mcB}{\mathcal{B}}
\begin{document}

\title{Limits of contraction groups and the Tits core}

\author[1]{Pierre-Emmanuel Caprace\thanks{F.R.S.-FNRS research associate, supported in part by the ERC (grant \#278469)}}
\author[2]{Colin D. Reid\thanks{Supported in part by ARC Discovery Project DP120100996}}
\author[2]{George A. Willis\thanks{Supported in part by ARC Discovery Project DP0984342}}

\affil[1]{Universit\'e catholique de Louvain, IRMP, Chemin du Cyclotron 2, bte L7.01.02, 1348 Louvain-la-Neuve, Belgique}
\affil[2]{University of Newcastle, School of Mathematical and Physical Sciences, Callaghan, NSW 2308, Australia}

\date{April 18, 2013}
\maketitle



\begin{abstract}
The \textbf{Tits core} $G^\dagger$ of a totally disconnected locally compact group $G$ is defined as the abstract subgroup generated by the closures of the contraction groups of all its elements. We show that a dense subgroup is normalised by the Tits core if and only if it contains it. It follows that every dense subnormal subgroup contains the Tits core. In particular, if $G$ is topologically simple, then the Tits core is abstractly simple, and when $G^\dagger$ is non-trivial, it is the unique minimal dense normal subgroup. The proofs are based on the fact, of independent interest, that the map which associates  to an element the closure of its contraction group is continuous.  
\end{abstract}

\section{Introduction}

In his seminal paper \cite{Tits64}, J.~Tits uses his notion of BN-pairs to show the following result: \emph{if $\mathbf G$ is a simple algebraic group over a field $k$, then every subgroup of $\mathbf G(k)$ normalised by $\mathbf G(k)^\dagger$ either is central, or contains $\mathbf G(k)^\dagger$}. The group  $\mathbf G(k)^\dagger$ is defined to be the subgroup of $\mathbf G(k)$ generated by the unipotent radicals of the $k$-defined parabolic subgroups. Tits deduces in particular that the quotient of $\mathbf G(k)^\dagger$ by its centre is simple as an abstract group.  

In the present paper, we associate a subgroup  $G^\dagger$ to an arbitrary locally compact group $G$ and call it the \textbf{Tits core} of $G$. It is defined as the subgroup of $G$ generated by the closures of the contraction groups of all elements of $G$. Recall that the  \defbold{contraction group} of an element $g \in G$ is given by
\[ \con(g) = \{ u \in G \mid g^nug^{-n} \rightarrow 1 \text{ as } n \rightarrow +\infty\}.\]
Thus by definition we have
$$
G^\dagger = \langle \overline{\con(g)} \; | \; g \in G\rangle.
$$
The Tits core is clearly normal, and even characteristic, in $G$, but it need not be closed \emph{a priori}. 

In case $G$ is the group of $k$-rational points of a simple algebraic group $\mathbf G$ over a local field $k$, the contraction group of each element coincides with the unipotent radical of some $k$-defined parabolic subgroup, and is in particular closed (see \cite[Lemma~2.4]{Prasad}). It follows that $G^\dagger=  \mathbf G(k)^\dagger$ in this case; this justifies our choice of terminology for $G^\dagger$. 

The main results of this paper may be viewed as analogues of the aforementioned theorem by Tits in the case of totally disconnected locally compact groups. For the sake of brevity, we shall write   \emph{\tdlc} for \emph{totally disconnected locally compact}. 

\begin{thm}\label{thm:condensenorm}
Let $G$ be a \tdlc group and let $D$ be a dense subgroup of $G$. If $G^\dagger$ normalises $D$, then $G^\dagger \le D$. 
\end{thm}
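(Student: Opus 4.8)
The plan is to reduce the statement to a single contraction group and then exploit the rigidity of contracting automorphisms. Since $G^\dagger=\langle\overline{\con(g)}\mid g\in G\rangle$, it suffices to prove that $\overline{\con(g)}\le D$ for every $g\in G$. The one elementary fact I would use repeatedly is this: because $\overline{\con(h)}\le G^\dagger$ for every $h$, every element $u$ lying in some $\overline{\con(h)}$ normalises $D$; hence for such a $u$ and any $d\in D$ the commutator $[u,d]=u\,d u^{-1} d^{-1}=(u d u^{-1})d^{-1}$ lies in $D$, since $u d u^{-1}\in uDu^{-1}=D$ and $d^{-1}\in D$.

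The core mechanism is a bijectivity argument. Fix $d\in D$ and write $\phi$ for conjugation by $d$. On the contraction group $\con(d)$ the automorphism $\phi$ is contracting, i.e.\ $\phi^{n}(u)\to 1$ for every $u\in\con(d)$. For a contracting automorphism the map $\psi\colon u\mapsto u\,\phi(u)^{-1}$ is a bijection, the inverse being $v\mapsto\prod_{k\ge 0}\phi^{k}(v)$, a product which converges precisely because $\phi$ contracts. Now observe that $\psi(u)=u(d u^{-1}d^{-1})=[u,d]$, so by the commutator fact every value $\psi(u)$ lies in $D$; surjectivity of $\psi$ then yields $\con(d)\le D$. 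Thus every element of $D$ already has its contraction group inside $D$. (When $\con(d)$ fails to be closed one must run $\psi$ on $\overline{\con(d)}$, where $\phi$ is no longer globally contracting because of the nub, and only the genuinely contracted part is reached --- this is the first sign of the real difficulty.)

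The remaining, and hardest, step is to pass from elements $d\in D$ to an arbitrary $g\in G$ and, simultaneously, to capture the closure $\overline{\con(g)}$ rather than $\con(g)$. Here the continuity of the map $\kappa\colon h\mapsto\overline{\con(h)}$ into the Chabauty space $\mcS(G)$ is indispensable. The natural strategy is to show that each $x\in\overline{\con(g)}$ lies in $\con(d)$ for a suitable $d\in D$ approximating $g$, for then $x\in\con(d)\le D$ by the previous step. Continuity gives $\overline{\con(d)}\to\overline{\con(g)}$ as $d\to g$ through $D$, so points of $\overline{\con(g)}$ are Chabauty-approximated by points of $\overline{\con(d)}$; the obstacle is that $D$ is not closed, so approximation alone only yields $x\in\overline{D}=G$. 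I expect the main work to lie exactly here: converting this approximation into exact membership, and in particular placing the nub $\nub(g)\subseteq\overline{\con(g)}$ --- whose elements are recurrent, not contracted, and hence invisible to the bijectivity argument --- inside $D$. This is the point at which the full strength of the continuity theorem, rather than mere density of $D$, must be brought to bear.
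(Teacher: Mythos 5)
Your first step is correct, and it is a genuinely different argument from the paper's: for $d \in D$, writing $\phi$ for conjugation by $d$, the map $\psi(u) = u\phi(u)\inv = [u,d]$ is indeed a bijection of $\con(d)$ onto itself (the inverse $v \mapsto \prod_{k \ge 0}\phi^k(v)$ converges since the partial products are eventually Cauchy inside a compact open subgroup, and the limit is again contracted), and each value $[u,d]$ lies in $D$ because $u \in \con(d) \le G^\dagger$ normalises $D$. This yields $\con(d) \le D$ for every $d \in D$ with no appeal to the tree representation theorem, which is the tool the paper uses at the corresponding stage (Proposition~\ref{prop:ContractionGroup:NC}).

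However, the two difficulties you flag at the end are not residual technicalities to be finessed; they are the substance of the theorem, and the proposal contains no idea that resolves either one. First, the passage from $d \in D$ to arbitrary $g \in G$: you correctly observe that Chabauty continuity of $h \mapsto \overline{\con(h)}$ cannot convert approximation into membership when $D$ is merely dense, but the fix is not a ``stronger use of continuity'' --- it is the statement that continuity is itself a corollary of, namely the exact conjugacy of Lemma~\ref{lem:conjugator} and Corollary~\ref{cor:conjugate_contraction} (a refinement of \cite[Lemma~14]{Willis94} that tracks where the conjugator lives): if $U$ is compact open and tidy above for $g$, then for every $u \in U$ there is $t \in U_+ \cap \con(g\inv)$ with $\con(gu) = t\con(g)t\inv$. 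The decisive point is that the conjugator $t$ lies in $\con(g\inv) \le G^\dagger$ and hence normalises $D$; so picking $d = gu \in D$ with $u \in U$ (possible by density), one gets $\overline{\con(g)} = t\inv\, \overline{\con(d)}\, t \le t\inv D t = D$. This conjugator lemma is the technical heart of the paper and is absent from your proposal. Second, the nub: your bijection reaches only the contracted part of $\overline{\con(d)}$, while $G^\dagger$ is generated by the \emph{closures} $\overline{\con(g)} = \nub(g)\con(g)$, so you must also place $\nub(d)$ inside $D$. The paper does this in Proposition~\ref{prop:ContractionGroup:NC} via the Baumgartner--Willis tree representation theorem together with the fact that the normal closure of $g$ in $\overline{\con(g)} \rtimes \langle g \rangle$ contains $\nub(g)$; nothing in the proposal substitutes for that step.
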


The following consequence is immediate by induction on the length of a subnormal chain: 

\begin{cor}\label{cor:Titscore}
Let $G$ be a \tdlc  group. Then every dense subnormal subgroup of $G$ contains the Tits core  $G^\dagger$. 
\end{cor}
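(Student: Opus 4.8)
The plan is to fix a subnormal chain $D = D_0 \lhd D_1 \lhd \dots \lhd D_n = G$ witnessing the subnormality of $D$, and to prove by \emph{downward} induction on $i$ that $G^\dagger \le D_i$ for every $i$; the case $i = 0$ is then exactly the assertion $G^\dagger \le D$. The only substantive input beyond elementary group theory is Theorem~\ref{thm:condensenorm}, which I intend to apply to each $D_i$ in turn. This should make the corollary follow with essentially no extra work, as the authors indicate.

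Two observations make the induction run. First, since $D \le D_i$ and $D$ is dense in $G$, each intermediate subgroup $D_i$ is itself dense in $G$; this supplies the density hypothesis required to invoke the theorem with $D_i$ in the role of the dense subgroup. Second, I must verify at each stage that $G^\dagger$ normalises $D_i$. The base case $i = n$ is immediate, since $G^\dagger \le G = D_n$. For the inductive step, suppose $G^\dagger \le D_{i+1}$. Because $D_i \lhd D_{i+1}$ and $G^\dagger$ is contained in $D_{i+1}$, every element of $G^\dagger$ lies in the normaliser of $D_i$; in other words $G^\dagger \le \N_{G}(D_i)$. Theorem~\ref{thm:condensenorm}, applied to the dense subgroup $D_i$ normalised by $G^\dagger$, then yields $G^\dagger \le D_i$, which completes the step.

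Iterating down to $i = 0$ gives $G^\dagger \le D_0 = D$, as required. I do not expect a genuine obstacle here once Theorem~\ref{thm:condensenorm} is available; the single point that needs care is the direction of the induction. One must work downward along the chain, so that the containment $G^\dagger \le D_{i+1}$ obtained at one level is precisely what provides the normalisation hypothesis $G^\dagger \le \N_{G}(D_i)$ needed to apply the theorem at the next level down. An upward induction would leave this hypothesis unavailable and the argument would stall.
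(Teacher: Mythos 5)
Your proof is correct and is exactly the induction on the length of a subnormal chain that the paper declares "immediate": downward induction along $D = D_0 \lhd D_1 \lhd \dots \lhd D_n = G$, using density of each $D_i$ (inherited from $D$) and the containment $G^\dagger \le D_{i+1} \le \N_G(D_i)$ to invoke Theorem~\ref{thm:condensenorm} at each step. Nothing is missing; your remark about the direction of the induction is the one genuine subtlety, and you handle it correctly.
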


A slightly weaker form of the latter result has recently been used by T.~Marquis \cite{Marquis} to prove that irreducible complete Kac--Moody groups over finite fields are abstractly simple, thereby settling the main remaining open case of a question of J. Tits going back to \cite{Tits89}. 

Recall that a topological group is called \textbf{topologically simple} if its only closed normal subgroups are the identity subgroup and the whole group. The following is immediate from Corollary~\ref{cor:Titscore}.

\begin{cor}\label{cor:Titscore=dnc}
Let $G$ be a topologically simple   \tdlc group. If  the Tits core $G^\dagger$ is non-trivial, then it is the unique minimal dense subnormal subgroup of $G$. 
\end{cor}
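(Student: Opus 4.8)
The plan is to show that $G^\dagger$ itself is a dense subnormal subgroup, and then to invoke Corollary~\ref{cor:Titscore} to conclude that it lies inside every other dense subnormal subgroup; this will immediately give both minimality and uniqueness. In other words, almost all of the work is already done by Corollary~\ref{cor:Titscore}, and the role of topological simplicity is only to supply density.

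First I would establish that $G^\dagger$ is dense. Since $G^\dagger$ is normal in $G$, its closure $\overline{G^\dagger}$ is a closed normal subgroup of $G$. As $G$ is topologically simple, $\overline{G^\dagger}$ is either trivial or all of $G$. The hypothesis $G^\dagger \neq \{1\}$ rules out the first alternative, because $G^\dagger \subseteq \overline{G^\dagger}$; hence $\overline{G^\dagger} = G$, that is, $G^\dagger$ is dense.

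Next, because $G^\dagger$ is characteristic, hence normal, in $G$, it is in particular subnormal. Combined with the previous step, $G^\dagger$ is a dense subnormal subgroup of $G$. By Corollary~\ref{cor:Titscore}, every dense subnormal subgroup of $G$ contains $G^\dagger$. Thus $G^\dagger$ is the least element of the family of dense subnormal subgroups: it is itself one, and it is contained in all of them. In particular, if $H$ is any minimal dense subnormal subgroup, then $G^\dagger \subseteq H$, and since $G^\dagger$ is itself dense and subnormal, minimality of $H$ forces $H = G^\dagger$. Hence $G^\dagger$ is the unique minimal dense subnormal subgroup.

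There is essentially no serious obstacle here: the substantive content is carried entirely by Corollary~\ref{cor:Titscore}, and the only additional input is the elementary observation that a non-trivial normal subgroup of a topologically simple group is dense. The one point that warrants care is reading \emph{minimal} as \emph{minimum}: since $G^\dagger$ sits below every dense subnormal subgroup, it is not merely minimal but the least such subgroup, which is precisely what forces uniqueness.
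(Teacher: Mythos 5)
Your proof is correct and is exactly the argument the paper intends: the paper states this corollary as ``immediate from Corollary~\ref{cor:Titscore}'', and your write-up simply makes explicit the two routine ingredients (density of the non-trivial normal subgroup $G^\dagger$ via topological simplicity, and the reduction of ``unique minimal'' to ``minimum''). Nothing further is needed.
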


As in the aforementioned situation studied by Tits, we obtain a result of abstract simplicity, which is immediate from Corollary~\ref{cor:Titscore=dnc}.

\begin{cor}\label{cor:Titscore:abssimple}
Let $G$ be a  \tdlc   group. If $G$ is topologically simple, then  the Tits core  $G^\dagger$ is abstractly simple. 
\end{cor}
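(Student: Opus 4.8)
The plan is to deduce Corollary~\ref{cor:Titscore:abssimple} directly from Corollary~\ref{cor:Titscore=dnc}, which tells us that when $G$ is topologically simple and $G^\dagger \neq 1$, the Tits core is the unique minimal dense subnormal subgroup of $G$. The goal is to show that $G^\dagger$ has no proper nontrivial normal subgroups \emph{as an abstract group}. So I would start by disposing of the trivial case: if $G^\dagger = 1$, then $G$ has no nontrivial contraction groups, and abstract simplicity of the trivial group is either vacuous or handled by convention, so the interesting case is $G^\dagger \neq 1$.

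\begin{proof}
If $G^\dagger$ is trivial there is nothing to prove, so assume $G^\dagger \neq 1$. Let $N$ be a nontrivial normal subgroup of the abstract group $G^\dagger$. Since $G^\dagger$ is normal (indeed characteristic) in $G$, and since normality is preserved under the operations involved, the key observation is that $\overline{N}$, the closure of $N$ in $G$, is a closed normal subgroup of $G$. Indeed, $N$ is normalised by $G^\dagger$, and because $G^\dagger$ is dense in $G$ (this is where topological simplicity enters, via Corollary~\ref{cor:Titscore=dnc}, since the unique minimal dense subnormal subgroup must itself be dense), the closure $\overline{N}$ is normalised by $\overline{G^\dagger} = G$. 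As $\overline{N}$ is a nontrivial closed normal subgroup of the topologically simple group $G$, we conclude $\overline{N} = G$; in particular $N$ is dense in $G$.
\end{proof}

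The main work is then to upgrade ``$N$ is a dense normal subgroup of $G^\dagger$'' to ``$N \geq G^\dagger$''. Here I would invoke Corollary~\ref{cor:Titscore=dnc}: $N$ is normal in $G^\dagger$ which is normal in $G$, so $N$ is subnormal in $G$, and we have just shown $N$ is dense in $G$; hence $N$ is a dense subnormal subgroup, and by Corollary~\ref{cor:Titscore} (or the uniqueness in Corollary~\ref{cor:Titscore=dnc}) it contains the Tits core, $N \supseteq G^\dagger$. Since also $N \subseteq G^\dagger$ by hypothesis, we get $N = G^\dagger$, establishing that $G^\dagger$ has no proper nontrivial normal subgroup, i.e.\ it is abstractly simple.

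The step I expect to be the main obstacle—or at least the one requiring the most care—is verifying that $\overline{N}$ is genuinely normal in all of $G$ and that the density of $G^\dagger$ is correctly in hand: one must check that the minimal dense subnormal subgroup supplied by Corollary~\ref{cor:Titscore=dnc} is dense (which is built into the word ``dense'' there) and that $G^\dagger$ being dense lets the normaliser pass to the closure. The remaining subtlety is purely bookkeeping about subnormality: confirming that a normal subgroup of $G^\dagger$ is subnormal in $G$ of length two, so that Corollary~\ref{cor:Titscore} applies verbatim. Everything else is formal, which is why the statement is flagged as ``immediate'' in the excerpt.
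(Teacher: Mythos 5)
Your proposal is correct and takes essentially the same approach as the paper: your argument (take a nontrivial normal subgroup $N$ of $G^\dagger$, use density of $G^\dagger$ and the fact that the normaliser of a closed subgroup is closed to deduce that $\overline{N}$ is normal in $G$, conclude that $N$ is a dense subnormal subgroup, and apply Corollary~\ref{cor:Titscore}) is exactly the argument the paper writes out in proving its refinement, Corollary~\ref{cor:abssim}. The only cosmetic difference is that you obtain the density of $G^\dagger$ by citing Corollary~\ref{cor:Titscore=dnc} rather than directly from topological simplicity of $G$; this changes nothing of substance.
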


A \tdlc group $G$ with trivial Tits core will be called \textbf{anisotropic}; equivalently $G$ is anisotropic if all its elements have trivial contraction group. This is certainly the case if $G$ is discrete, or compact, or   if every element of $G$ is contained in a compact subgroup. It should be emphasised that non-discrete topologically simple groups may also be anisotropic: indeed, the examples of topologically simple  \tdlc groups constructed in \cite[\S3]{Willis} are all unions of ascending chains of compact open subgroups, hence anisotropic. Notice however that such groups are not compactly generated; it is in fact an interesting open problem to determine whether  a non-compact compactly generated \tdlc group without infinite discrete quotient can be anisotropic. 

In any case, the Tits core is contained in the kernel of all anisotropic quotients of $G$. More precisely, results by Baumgartner--Willis \cite{BaumgartnerWillis} imply the following characterisation. 

\begin{prop}\label{prop:AnisotropicResidual}
Let $G$ be a \tdlc group and $N$ be a closed normal subgroup. Then $G/N$ is anisotropic if and only if $G^\dagger \leq N$. In particular $\overline{G^\dagger}$ is the unique smallest closed normal subgroup of $G$ affording an anisotropic quotient. 
\end{prop}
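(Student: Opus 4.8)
The plan is to establish the stated equivalence first, and then read off the ``in particular'' clause as a formal consequence. Write $\pi\colon G\to G/N$ for the quotient map; it is a continuous, surjective, open homomorphism, and I would treat the two implications separately since only one of them requires genuine input.

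For the implication ``$G/N$ anisotropic $\Rightarrow G^\dagger\le N$'', only continuity and equivariance of $\pi$ are needed. Given $g\in G$ and $u\in\con(g)$, the relation $g^nug^{-n}\to 1$ maps under $\pi$ to $\pi(g)^n\pi(u)\pi(g)^{-n}\to 1$, so $\pi(u)\in\con_{G/N}(\pi(g))$. If $G/N$ is anisotropic this contraction group is trivial, whence $\pi(u)=1$, i.e. $u\in N$. Thus $\con(g)\subseteq N$, and as $N$ is closed, $\overline{\con(g)}\subseteq N$ for every $g$; generating, $G^\dagger\le N$.

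The converse ``$G^\dagger\le N\Rightarrow G/N$ anisotropic'' is where the Baumgartner--Willis machinery enters, and this is the main obstacle. The issue is that an element of $\con_{G/N}(\pi(g))$ need not be the image of an element of $\con(g)$: a sequence $g^nug^{-n}$ may converge modulo $N$ without converging in $G$. The point I would import from \cite{BaumgartnerWillis} is precisely that contraction groups do not grow under passage to closed normal quotients beyond what comes from upstairs---concretely, that $\con_{G/N}(\pi(g))\subseteq \pi\big(\overline{\con(g)}\big)$ (equivalently, that the closed contraction group of $\pi(g)$ is the image of $\overline{\con(g)}$). Granting this, if $G^\dagger\le N$ then $\overline{\con(g)}\subseteq N=\ker\pi$, so $\pi\big(\overline{\con(g)}\big)=1$ and hence $\con_{G/N}(\pi(g))=1$; since $\pi$ is onto, every element of $G/N$ arises as some $\pi(g)$, so $G/N$ is anisotropic.

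Finally, for the uniqueness clause I would note that $\overline{G^\dagger}$ is closed and characteristic (the closure of the characteristic subgroup $G^\dagger$ is again characteristic because automorphisms are homeomorphisms), hence in particular a closed normal subgroup. Because $N$ is closed, $G^\dagger\le N$ holds iff $\overline{G^\dagger}\le N$; combined with the equivalence just proved, the closed normal subgroups $N$ with $G/N$ anisotropic are exactly those containing $\overline{G^\dagger}$. Taking $N=\overline{G^\dagger}$ shows $G/\overline{G^\dagger}$ is itself anisotropic, and the minimality and uniqueness of $\overline{G^\dagger}$ among such subgroups are then immediate.
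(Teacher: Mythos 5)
Your proposal is correct and follows essentially the same route as the paper: the easy direction by pushing contraction elementwise through the quotient map, and the converse by importing the Baumgartner--Willis result on contraction groups in quotients (their Theorem~3.8, which gives the equality $\con(gN/N) = \pi(\con(g))$, of which your stated containment $\con_{G/N}(\pi(g))\subseteq \pi\big(\overline{\con(g)}\big)$ is an immediate consequence). Your explicit treatment of the ``in particular'' clause is fine; the paper simply leaves it as immediate.
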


Although the results above highlight an analogy between the Tits core of a \tdlc group  and that of a simple algebraic group, the methods of proofs are necessarily completely different (in particular a \tdlc group need not have a BN-pair, even if it is topologically simple). 
The main tool in proving Theorem~\ref{thm:condensenorm} is  an analysis of the behaviour of the contraction groups associated to a converging sequence of elements in $G$. In order to describe our results in this direction, we recall that the   \textbf{nub} of the element $g$ in  a \tdlc group $G$ is defined by
$$
\nub(g) = \overline{\con(g)} \cap \overline{\con(g\inv)}.
$$ 
The nub is a compact subgroup normalised by $g$; it coincides with the unique maximal compact subgroup of $G$ normalised by $g$ and on which $g$ acts ergodically (see \cite{WilNub} and Theorem~\ref{wilnub} below). A key property of the nub    is that $\nub(g)$ is trivial if and only if $\con(g)$ is closed (see \cite[Theorem~3.32]{BaumgartnerWillis} and \cite{Jaworski}).

\begin{thm}\label{thm:connub:converge}
Let $G$ be a \tdlc group and $(g_n) $ be a net of elements of $G$ converging to some limit $g \in G$.  Then for $n$ sufficiently large, there exist elements $t_n$ and $r_n$ of $G$ such that $t_n,r_n \to 1$ as $n \to \infty$ and such that
\[ \con(g_n) = t_n\con(g)t\inv_n; \; \nub(g_n) = r_n\nub(g)r\inv_n.\]
\end{thm}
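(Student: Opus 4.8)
The plan is to reduce the statement to the theory of tidy subgroups and the Baumgartner--Willis analysis of contraction groups \cite{BaumgartnerWillis}, and then to show that the relevant structure is transported from $g$ to $g_n$ by a small conjugation. Fix a compact open subgroup $U$ that is tidy for $g$, write $\alpha(x)=gxg\inv$ and $\alpha_n(x)=g_nxg_n\inv$, and recall the associated subgroups $U_{\pm}=\bigcap_{k\ge 0}\alpha^{\pm k}(U)$, $U_{++}=\bigcup_{k\ge 0}\alpha^k(U_+)$ and $U_{--}=\bigcup_{k\ge 0}\alpha^{-k}(U_-)$. I shall use three inputs: that $\con(g)$ and $\nub(g)$ are recovered from this tidy data (in particular $\con(g)\subseteq U_{--}$, and $\nub(g)=\bigcap\{V : V \text{ tidy for } g\}$ by \cite{WilNub}); that $\con(g)$ is closed precisely when $\nub(g)=1$ \cite{Jaworski}; and the obvious equivariance $\con(hgh\inv)=h\con(g)h\inv$, $\nub(hgh\inv)=h\nub(g)h\inv$ for all $h\in G$. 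The aim is to produce $t_n\to 1$ with $t_n\inv g_n t_n$ sharing the contraction group of $g$, and likewise a (possibly different) $r_n$ for the nub.

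The starting observation is that agreement is automatic to first order. Since $g_n\to g$ and the centraliser $C_G(U)$ is open, for all large $n$ we have $g_n\in g\,C_G(U)$, so $g_n u g_n\inv=gug\inv$ for every $u\in U$; in particular $\alpha_n(U)=\alpha(U)$. The difficulty is that this agreement holds only on $U$: when we iterate, $\alpha_n^2(U)=g_n\alpha(U)g_n\inv$ is obtained by conjugating $\alpha(U)=gUg\inv$, on which $g_n$ and $g$ need no longer agree, and the discrepancy propagates outward into the expanding and contracting subgroups $U_{++}$ and $U_{--}$. Quantitatively, writing $d_n=g\inv g_n\to 1$ and decomposing $d_n=a_nb_n$ with $a_n\in U_+$, $b_n\in U_-$, the comparison of the two orbits is governed by the identity
\[ g^{-k}g_n^{k}=\alpha^{-(k-1)}(d_n)\cdots\alpha^{-1}(d_n)\,d_n ,\]
in which the factors $\alpha^{-j}(a_n)$ stay inside $U_+$ (as $\alpha\inv$ contracts $U_+$) and are harmless, whereas the factors $\alpha^{-j}(b_n)\in\alpha^{-j}(U_-)\subseteq U_{--}$ may leave $U$ and are the source of the defect, the defect lying precisely along the directions in which $g$ contracts. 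The plan is to absorb this part of the perturbation into the conjugator: one seeks $t_n\to 1$ such that $\tilde g_n:=t_n\inv g_n t_n$ agrees with $g$ not merely on $U$ but on the whole of $U_{++}$ and $U_{--}$, so that $U$ becomes tidy for $\tilde g_n$ with identical tidy data; since $U_{--}$ is $\alpha$-invariant and contains $\con(g)$, agreement there forces $\con(\tilde g_n)=\con(g)$, and equivariance then yields $\con(g_n)=t_n\con(g)t_n\inv$.

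The main obstacle is the control of these iterated conjugators. Because $\alpha\inv$ expands in the $U_-$ direction, the terms $\alpha^{-j}(b_n)$ grow as $j$ increases, so it is not \emph{a priori} clear that the cumulative discrepancy over all $k$ can be realised by a \emph{single} element $t_n$, let alone one tending to $1$; making this precise---presumably by estimating the defect inside the locally contractive directions and showing the successive corrections form a convergent product there---is the technical heart of the argument. A secondary subtlety is that the theorem concerns the honest (possibly non-closed) contraction group rather than its closure, so one must upgrade equality of tidy data to equality of $\con$ itself, which is where the characterisation "$\nub(g)=1$ iff $\con(g)$ closed" and the reconstruction of $\con(g)$ from $U$ enter.

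Finally, for the nub I would argue separately, and this is why the statement records $t_n$ and $r_n$ as distinct. Since $\nub(g_n)=\overline{\con(g_n)}\cap\overline{\con(g_n\inv)}$, the contraction-group result applied to both $g_n$ and $g_n\inv$ gives $\overline{\con(g_n)}=t_n\overline{\con(g)}t_n\inv$ and $\overline{\con(g_n\inv)}=s_n\overline{\con(g\inv)}s_n\inv$ with $t_n,s_n\to 1$, but the two conjugators need not coincide, so their intersection is not immediately a conjugate of $\nub(g)$. The cleaner route is to use $\nub(g_n)=\bigcap\{V : V\text{ tidy for }g_n\}$ from \cite{WilNub}: once the tidy subgroups of $g_n$ are shown to be exactly the small conjugates $r_n V r_n\inv$ of those of $g$ (with $r_n\to 1$), intersecting yields $\nub(g_n)=r_n\nub(g)r_n\inv$ directly.
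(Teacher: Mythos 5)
Your outline correctly reduces to the case $g_n = gu_n$ with $u_n = g\inv g_n \to 1$, correctly locates the defect along the directions contracted by $g$, and then explicitly defers what you yourself call ``the technical heart'': producing a \emph{single} conjugator $t_n \to 1$ that absorbs the cumulative discrepancy over all powers $k$. That deferred step is not a detail to be filled in later; it is essentially the entire content of the paper's proof, namely Lemma~\ref{lem:conjugator}. There, for $U$ tidy above for $g$ and $u \in U$, one builds by induction elements $t_n \in U_+$ with $t_n^{-1}(gu)^k t_n = b_{n,k}g^k$, $b_{n,k} \in U$, for $0 \le k \le n$: at each step the accumulated defect $ut_nb_{n,n} \in U$ is split as $w_-w_+$ with $w_\pm \in U_\pm$ (this is exactly where tidiness above, $U = U_+U_-$, is used), and the correction $y = g^{-n}w_+^{-1}g^n$ stays inside $U_+$. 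Since $U_+$ is \emph{compact}, a limit point $t$ of $(t_n)$ satisfies the tracking condition $t^{-1}(gu)^k t g^{-k} \in U$ for all $k \ge 0$ simultaneously; it is compactness, not any convergent-product estimate, that produces the single conjugator you were worried about. Note also that the intermediate goal you set yourself --- exact pointwise agreement of $t_n\inv g_n t_n$ with $g$ on all of $U_{--}$ and $U_{++}$ --- is stronger than conjugation can deliver and stronger than needed: bounded tracking suffices, because for $c \in \con(g)$ one has $(gu)^k(tct^{-1})(gu)^{-k} = (tb_k)(g^kcg^{-k})(tb_k)^{-1} \to 1$, conjugation by the elements $tb_k$ of the compact group $U$ (which has a base of neighbourhoods of $1$ consisting of subgroups normal in $U$) being harmless (Corollary~\ref{cor:conjugate_contraction}). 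Finally, $t_n \to 1$ is arranged structurally, not by estimates: one chooses a chain $(U_n)$ of compact open subgroups, all tidy above for $g$, with $\bigcap_n U_n = 1$, passes to a subnet with $u_n \in U_n$, and observes $t_n \in (U_n)_+ \subseteq U_n$.

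Two further claims in your proposal are wrong or unproven. First, your ``agreement to first order'' rests on the assertion that $\CC_G(U)$ is open; this is false in general --- in $\PSL_2(\bQ_p)$ the centraliser of a compact open subgroup is trivial --- so you cannot conclude $g_nug_n\inv = gug\inv$ for $u \in U$. What is true for large $n$ is only $g\inv g_n \in U$, hence the set equality $g_nUg_n\inv = gUg\inv$ (because $U \le \N_G(U)$), not the pointwise one; fortunately your later product identity for $g^{-k}g_n^k$ does not actually use the pointwise claim. Second, your route to the nub hinges on the assertion that the tidy subgroups of $g_n$ are \emph{exactly} the conjugates $r_nVr_n\inv$ of those of $g$ by one small element $r_n$; that is an unproven statement of at least the same depth as the theorem. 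The paper instead proves a two-sided tracking lemma (Lemma~\ref{lem:conjugator2}): for $u \in U \cap g^{-1}Ug$, the forward conjugator $t \in U_+$ and the backward conjugator $s \in U_-$ (obtained by applying Lemma~\ref{lem:conjugator} to $g\inv$ and $gu^{-1}g^{-1}$) are spliced into a single $r$, by writing $s^{-1}t = v_+v_-$ with $v_\pm \in U_\pm$ and setting $r = tv_-^{-1} = sv_+$; this gives $r^{-1}(gu)^kr = b_kg^k$ with $b_k \in U$ for \emph{all} $k \in \bZ$, which conjugates $\mathrm{bco}(g) = \con(g) \cap \prb(g\inv)$ onto $\mathrm{bco}(gu)$, and the characterisation $\nub(g) = \overline{\mathrm{bco}(g)}$ (Theorem~\ref{wilnub}(ii)) concludes (Corollary~\ref{cor:nub_conjugate}). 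So while your reduction and your diagnosis of where the difficulty lies do match the paper's strategy, the constructions that constitute the proof are absent.
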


Denoting by $\mathcal S(G)$ the compact space of closed subgroups of $G$ endowed with the Chabauty topology, we obtain the following consequence.

\begin{cor}\label{cor:Chabauty}
Let $G$ be a \tdlc group. Then the maps  
$$
G \to \mathcal S(G) : g \mapsto \overline{\con(g)} \hspace{.5cm} \text{and} \hspace{.5cm} G \to \mathcal S(G) :  g \mapsto \nub(g)
$$ 
are both continuous.
\end{cor}

\section{Preliminaries}

Relatively little is known about the overall topological dynamics of a \tdlc group acting on itself.  By contrast, there is a well-developed theory of the dynamics of $\ZZ$ acting on a \tdlc group by continuous automorphisms.  We recall some basic concepts and results in this area which will be needed in the sequel; the main references are  \cite{BaumgartnerWillis}, \cite{Willis94} and \cite{WilNub}. Notice that the results from   \cite{BaumgartnerWillis} are stated under the hypothesis that the ambient groups are metrisable; this hypothesis has been removed by Jaworski~\cite{Jaworski}; we shall therefore freely refer to the results of \cite{BaumgartnerWillis} without any further comment about metrisability.

 For the rest of this section, fix a \tdlc group $G$.  We only allow automorphisms that preserve the topology of $G$.

Given an automorphism $f$ of $G$, the \defbold{contraction group} $\con(f)$ of $f$ on $G$ is given by
\[ \con(f) = \{ u \in G \mid f^n(u) \rightarrow 1 \text{ as } n \rightarrow +\infty\}.\]

One sees that $\con(f)$ is a subgroup of $G$, although not necessarily a closed subgroup.

Given a subgroup $U$ of $G$, define the following subgroups:
\[ U_- := \bigcap_{i \le 0}f^i(U); \quad U_+ := \bigcap_{i \ge 0}f^i(U); \quad U_0 := \bigcap_{i \in \bZ}f^i(U);\]
\[ U_{--} := \bigcup_{j \le 0} f^j(U_-); \quad U_{++} := \bigcup_{j \ge 0} f^j(U_+).\]

Say $U$ is \defbold{tidy above for $f$} if $U = U_+U_-$.  Say $U$ is \defbold{tidy below for $f$} if $U_{--}$ and $U_{++}$ are closed subgroups of $G$.  A \defbold{tidy subgroup for $f$} is an open compact subgroup of $G$ that is tidy both above and below for $f$.

\begin{thm}\label{tidythm}
Let $U$ be an open compact subgroup of $G$.
\begin{enumerate}[(i)]
\item There is some integer $k$ such that $\bigcap^k_{i=0} f^i(U)$ is tidy above for $f$.  Consequently $U_+U_-$ contains an open subgroup of $U$.

\item There is an open compact subgroup of $G$ that is tidy for $f$.

\item We have $U_{--} = \con(f)U_0$ (and similarly $U_{++} = \con(f\inv)U_0$).  In particular, $\con(f) = \con(f\inv) = 1$ if and only if the set of open compact subgroups of $G$ that are $f$-invariant forms a base of neighbourhoods of the identity.

\item We have  $U_{-} = (\con(f) \cap U_-)U_0$ and $U_+ = (\con(f\inv) \cap U_+) U_0$. 

\item If in addition $U$ is tidy for $f$, then we have $U_{-} = (\con(f) \cap U)U_0$ and $U_+ = (\con(f\inv) \cap U) U_0$. 
\end{enumerate}
\end{thm}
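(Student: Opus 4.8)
The plan is to handle the five parts in the order (i), then (iv) and (iii) together, and finally (ii) and (v), since the tidying-above procedure is elementary and drives the contraction-group statements, whereas genuine tidiness (tidy below) is the deep point. For (i), set $U_n = \bigcap_{i=0}^n f^i(U)$, a decreasing chain of compact open subgroups. The engine is the monotonicity of the indices $[U_n : U_{n+1}]$. Since $U_{n+2} = U_{n+1} \cap f(U_{n+1})$ and both $U_{n+1}$ and $f(U_{n+1})$ lie in $f(U_n)$, embedding left cosets gives
\[ [U_{n+1} : U_{n+2}] = [U_{n+1} : U_{n+1} \cap f(U_{n+1})] \le [f(U_n) : f(U_{n+1})] = [U_n : U_{n+1}]. \]
These are positive integers, so the sequence is eventually constant, say for $n \ge k$. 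I would then observe that constancy turns the coset embedding into a bijection, forcing $f(U_n) = U_{n+1}f(U_{n+1})$ for $n \ge k$, and unwind this relation to obtain $U_k = (U_k)_+(U_k)_-$, i.e. $U_k$ is tidy above. The last clause of (i) follows since $U_k$ is open in $U$.

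For (iv) I would argue by compactness inside $U_-$. Note $U_-$ is a compact subgroup with $f(U_-) = U_- \cap f(U)$ open in $U_-$, hence of finite index, and $\bigcap_{n \ge 0} f^n(U_-) = U_0$. Given $x \in U_-$, the aim is to produce $c \in \con(f) \cap U_-$ and $z \in U_0$ with $x = cz$; the idea is to build $c$ as the limit of partial products of coset representatives chosen successively in the descending chain $f^n(U_-)$, using finiteness of the indices $[f^n(U_-) : f^{n+1}(U_-)]$ and compactness of $U_-$ to guarantee convergence of the product and membership of the limit in $\con(f)$. The inclusion $\supseteq$ being trivial, this yields (iv), and the analogous statement for $f^{-1}$ and $U_+$ is symmetric.

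Part (iii) then follows quickly. For $\con(f)U_0 \subseteq U_{--}$: if $u \in \con(f)$ then $f^n(u) \to 1$, so for large $N$ the forward orbit of $w := f^N(u)$ stays in $U$, whence $w \in \bigcap_{n \ge 0} f^{-n}(U) = U_-$ and $u = f^{-N}(w) \in U_{--}$; combined with $U_0 \subseteq U_-$ and the fact that the images $f^j(U_-)$ ($j \le 0$) form an ascending chain of subgroups with union $U_{--}$, this gives the inclusion. The reverse inclusion $U_{--} \subseteq \con(f)U_0$ reduces to (iv): if $x \in U_{--}$ then $f^m(x) \in U_-$ for some $m \ge 0$, and decomposing $f^m(x)$ by (iv) and applying $f^{-m}$ (using the $f$-invariance of both $\con(f)$ and $U_0$) places $x$ in $\con(f)U_0$. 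The statement about $f$-invariant compact open subgroups is the special case in which $\con(f) = \con(f^{-1}) = 1$, so that $U_{--} = U_0 = U_{++}$.

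The main obstacle is (ii), and specifically the closedness assertions behind tidiness below; part (v) then falls out. Starting from a subgroup $W$ that is tidy above by (i), the subgroups $W_{++}$ and $W_{--}$ need not be closed, and the technical heart is to repair this: one shows that $\mathbb{L} := \overline{W_{++}} \cap \overline{W_{--}}$ is a compact $f$-invariant subgroup and that a suitable enlargement of $W$ built from $\mathbb{L}$ is again compact, open and tidy above while now having closed $(\cdot)_{++}$ and $(\cdot)_{--}$; verifying these closedness properties is the delicate step, and I would follow the tidying-below procedure of the structure theory cited above. Finally (v) genuinely uses full tidiness (not merely tidy above): tidiness forces $\con(f) \cap U \subseteq U_-$, so that $\con(f) \cap U$ and $\con(f) \cap U_-$ have the same image modulo $U_0$, which upgrades (iv) to the assertion $U_- = (\con(f) \cap U)U_0$, and symmetrically for $U_+$.
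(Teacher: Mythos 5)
Your part (i), your deduction of (iii) from (iv), and your reduction of (v) to the inclusion $\con(f)\cap U\subseteq U_-$ are all structurally sound (and more self-contained than the paper, which simply cites \cite{Willis94}, \cite{Willis01} and \cite{BaumgartnerWillis} for the substantive assertions). The genuine gap is your claim that (iv) admits an elementary proof by compactness; in fact (iv) is, alongside (ii), the deep part of the theorem. The descending chain $f^n(U_-)$ has intersection $U_0$, not $\{1\}$, so the coset representatives you choose at stage $n$ lie in $f^n(U_-)$, a set only guaranteed to be near $U_0$. Consequently, finiteness of the indices plus compactness gives neither convergence of the partial products nor membership of a subsequential limit in $\con(f)$: all it yields is that every $x\in U_-$ can be written $x=cz$ with $z\in U_0$ and $c\in U_-$ whose forward orbit accumulates on the \emph{set} $U_0$ --- which is trivially true for any $z \in U_0$ whatsoever. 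Concretely, your scheme is compatible with taking the first representative to be $xz\inv$ for an arbitrary $z\in U_0$ (legitimate, since $z$ lies in every $f^n(U_-)$) and all later representatives trivial; this outputs $c=xz\inv$, which in general is not in $\con(f)$. The entire content of (iv) is the selection of the particular $z\in U_0$ for which $f^n(xz\inv)\to 1$ rather than merely approaching $U_0$, i.e.\ showing that the forward orbit of $x$ asymptotically tracks an honest $f$-orbit inside $U_0$. When $f$ acts ergodically on $U_0$ --- exactly the nub phenomenon this paper is concerned with --- the iterates of $f$ are not equicontinuous on $U_0$, and no soft compactness argument produces the tracking orbit. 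Your argument is complete only in the special case $U_0=1$, where $f^n(U_-)$ shrinks to $\{1\}$ and every element of $U_-$ is automatically contracted. This is why the paper runs the logic in the opposite direction: it quotes (iii) as \cite[Proposition~3.16]{BaumgartnerWillis}, whose proof requires the tidying machinery, and then derives (iv) in one line from $U_0\le U_-\le U_{--}$.

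Two smaller gaps. First, for the second assertion of (iii), knowing $\con(f)=\con(f\inv)=1$ gives $U_-=U_0=U_+$ for every $U$, but $U_0$ is an intersection of infinitely many open subgroups and need not be open, so you do not yet have a base of $f$-invariant compact open subgroups; one must invoke (i): a tidy-above subgroup $V\le U$ then satisfies $V=V_+V_-=V_0$, which \emph{is} open and $f$-invariant. This is precisely why the paper says the second assertion ``follows from the first, combined with (i)''. Second, in (v) the statement ``tidiness forces $\con(f)\cap U\subseteq U_-$'' is itself the nontrivial point, and it genuinely requires tidiness \emph{below}: an element $u\in\con(f)\cap U$ obviously lies only in $U_{--}\cap U$, and the equality $U_{--}\cap U=U_-$ is equivalent to closedness of $U_{--}$ (this is \cite[Lemma~3]{Willis94}, which is exactly what the paper cites here); for a merely tidy-above subgroup it can fail. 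Both of these can be repaired by citation, as you already do for (ii), but as written they are asserted rather than proved.
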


\begin{proof}
For (i) and (ii), see \cite{Willis94} and \cite{Willis01}. The first assertion of part (iii) is \cite[Proposition~3.16]{BaumgartnerWillis}. The second follows from the first, combined with (i). Part (iv) follows from (iii) since $U_0 \leq U_- \leq U_{--}$, and similarly for $U_+$. Finally, for (v) we observe that by \cite[Lemma~3]{Willis94} the group $U_{--}$ is closed if and only if $U_{--} \cap U = U_-$. Thus if $U$ is tidy for $f$, then (iii) yields $U_{-} = U_{--} \cap U =  (\con(f) \cap U)U_0$ since $U_0 \leq U$. The proof for $U_+$ is similar. 
\end{proof}

A more recent development in the dynamical theory of automorphisms of \tdlc groups is the \textbf{nub} of an automorphism.  As usual, we will define the nub of a group element to be the nub of the corresponding inner automorphism.  Before presenting several characterisations of the nub, we introduce one more subgroup associated with an automorphism $f$ of a \tdlc group $G$, namely  the \defbold{parabolic group} $\prb(f)$. It is defined to consist of those elements $x \in G$ such that the set ${\{f^n(x) \mid n \in \bN\}}$ is relatively compact.  By \cite[Proposition~3]{Willis94}, the parabolic subgroup $\prb(f)$ is always  closed.

\begin{thm}\label{wilnub}
Let $G$ be a \tdlc group and let $f$ be an automorphism of $G$.  Then there is an $f$-invariant compact subgroup $\nub(f)$ of $G$, which is equal to each of the following sets:
\begin{enumerate}[(i)]
\item the intersection of all tidy subgroups for $f$;
\item the closure of the set $\mathrm{bco}(f) := \con(f) \cap \prb(f\inv)$;

{
\item the intersection $\overline{ \con(f)} \cap \prb(f\inv)$;

\item the intersection $\overline{\con(f)} \cap \overline{\con(f\inv)}$;
}

\item the intersection $\bigcap_{V \in \mcB(G)} \overline{\mathrm{rbco}(f,V)}$, where
\[ \mathrm{rbco}(f,V) := \{ x \in \prb(f\inv) \mid \exists N \; \forall n \ge N \; f^n(x) \in V \} ;\]
\item the largest compact, $f$-stable subgroup of $G$ having no relatively open $f$-stable subgroups;
\item the largest compact, $f$-stable subgroup of $G$ on which $f$ acts ergodically.
\end{enumerate}
\end{thm}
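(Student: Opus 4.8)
The plan is to fix the characterization (i) as the working definition of $\nub(f)$ and to show that each of the other six sets equals it. First I would record the structural features of (i). Tidy subgroups for $f$ exist by Theorem~\ref{tidythm}(ii); the family of all tidy subgroups is stable under $f$ (since $f(U)_{\pm}=f(U_{\pm})$, both tidiness above and tidiness below are preserved) and under the substitution $f\mapsto f\inv$ (the conditions $U=U_+U_-$ and the closedness of $U_{--},U_{++}$ are symmetric in $f$ and $f\inv$). Hence $\nub(f)=\bigcap\{U\mid U\text{ tidy for }f\}$ is a compact $f$-invariant subgroup with $\nub(f)=\nub(f\inv)$. Moreover, being $f$-invariant and contained in each tidy $U$, it lies in $U_0=\bigcap_{i\in\bZ}f^i(U)$, so in fact $\nub(f)=\bigcap_{U\text{ tidy}}U_0$; this refinement is what lets the tidy decompositions of Theorem~\ref{tidythm} interact with the contraction groups.

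Next I would treat the contraction-theoretic descriptions (ii), (iii), (iv), writing $\mathrm{bco}(f)=\con(f)\cap\prb(f\inv)$. Two inclusions into (iii) are immediate from the closedness of $\prb(f\inv)$: first $\overline{\mathrm{bco}(f)}\subseteq\overline{\con(f)}\cap\prb(f\inv)$, giving (ii)$\subseteq$(iii); second, a convergent orbit is relatively compact, so $\con(f\inv)\subseteq\prb(f\inv)$ and hence $\overline{\con(f\inv)}\subseteq\prb(f\inv)$, giving (iv)$\subseteq$(iii). For (iii)$\subseteq$(i), fix a tidy $U$ and let $x\in\overline{\con(f)}\cap\prb(f\inv)$. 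Since $U$ is tidy below, $U_{--}$ is closed and, by Theorem~\ref{tidythm}(iii), equals $\con(f)U_0$; thus $\overline{\con(f)}\subseteq U_{--}=\con(f)U_0$ and we may write $x=cz$ with $c\in\con(f)$ and $z\in U_0$. As $U_0$ is a compact $f$-invariant group, $z\in\prb(f\inv)$, and since $\prb(f\inv)$ is a subgroup we get $c=xz\inv\in\con(f)\cap\prb(f\inv)=\mathrm{bco}(f)$. Granting the inclusion $\mathrm{bco}(f)\subseteq U$ (see below), this yields $x=cz\in U\cdot U_0=U$; as $U$ was an arbitrary tidy subgroup, $x\in\nub(f)$. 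The same inclusion $\mathrm{bco}(f)\subseteq U$ for all tidy $U$ gives (ii)$=\overline{\mathrm{bco}(f)}\subseteq\nub(f)$.

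To close the loop I would establish the two reverse inclusions $\mathrm{bco}(f)\subseteq\nub(f)$ and $\nub(f)\subseteq\overline{\mathrm{bco}(f)}$, which together with the previous paragraph give (i)$=$(ii)$=$(iii); characterization (iv) then follows by symmetry, since applying $\nub(f)\subseteq\overline{\mathrm{bco}(f)}\subseteq\overline{\con(f)}$ to both $f$ and $f\inv$ and using $\nub(f)=\nub(f\inv)$ yields $\nub(f)\subseteq\overline{\con(f)}\cap\overline{\con(f\inv)}\subseteq$(iii)$=\nub(f)$. For (v) note that ``$f^n(x)$ eventually lies in every neighbourhood $V$'' is precisely ``$f^n(x)\to1$'', so $\bigcap_{V\in\mcB(G)}\mathrm{rbco}(f,V)=\mathrm{bco}(f)$, whence (ii)$\subseteq$(v) trivially and (v)$\subseteq\overline{\mathrm{bco}(f)}$ is the assertion that, here, the closure may be taken before intersecting over $V$ --- a point I would settle with the same tidy-subgroup estimates as above. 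The relative compactness of $\mathrm{bco}(f)$ and the containment $\mathrm{bco}(f)\subseteq U$ are part of the tidy-below analysis of \cite{BaumgartnerWillis}, which I would cite.

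Finally, for the dynamical characterizations (vi) and (vii) I would work inside the compact $f$-invariant group $\nub(f)$. The equivalence (vi)$\Leftrightarrow$(vii) is the statement that a continuous automorphism of a profinite group is ergodic for Haar measure if and only if the group admits no proper open invariant subgroup: such a subgroup is an invariant set of measure strictly between $0$ and $1$, and conversely, if $f$ is not ergodic then a non-trivial $f$-invariant measurable set yields, through the profinite structure of $\nub(f)$, a proper open $f$-invariant subgroup. That $\nub(f)$ itself has no proper relatively open $f$-stable subgroup, and is the largest compact $f$-stable subgroup with this property, is exactly where the description (i) as an intersection of tidy subgroups is indispensable, and is the substance of \cite{WilNub}, which I would invoke for the equivalences among (i), (v), (vi), (vii). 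I expect the main obstacle to be precisely this bridge between the intrinsic/dynamical side and the contraction side: the forward inclusions above follow cleanly from the tidy decompositions of Theorem~\ref{tidythm}, but the reverse inclusion $\nub(f)\subseteq\overline{\mathrm{bco}(f)}$ --- that the abstractly defined intersection of tidy subgroups is approximated by bounded-contraction elements --- together with the maximality in (vi), is the genuinely hard point, and is where the theory of \cite{BaumgartnerWillis} and \cite{WilNub} does the real work.
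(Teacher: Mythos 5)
Your proposal is correct in substance, but note that the paper does not actually prove this theorem: it is a preliminaries result, and the paper's entire proof is a citation --- items (i), (ii), (v), (vi), (vii) to Theorem 4.12 of \cite{WilNub}, and items (iii), (iv) to Lemma 3.29 and Corollary 3.27 of \cite{BaumgartnerWillis}. Your route differs in that you take (i) as the working definition and genuinely derive (iii) and (iv) from the two core identities $\mathrm{bco}(f)\subseteq\nub(f)$ and $\nub(f)\subseteq\overline{\mathrm{bco}(f)}$, together with Theorem~\ref{tidythm} and the closedness of $\prb(f\inv)$. Those derivations check out: the decomposition $x=cz$ with $c\in\con(f)$, $z\in U_0$ via $U_{--}=\con(f)U_0$ and closedness of $U_{--}$, the observation $U_0\subseteq\prb(f\inv)$, and the symmetry $\nub(f)=\nub(f\inv)$ are all sound, so you are in effect reproving the two results of \cite{BaumgartnerWillis} that the paper cites, while your remaining citations land exactly on what \cite{WilNub} Theorem 4.12 supplies. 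What the paper's approach buys is brevity appropriate to a background section; what yours buys is an explicit dependence structure, showing that (iii) and (iv) are formal consequences of (i)$=$(ii) plus the tidy-subgroup machinery. Two places where your sketch is thin --- the inclusion $\bigcap_{V}\overline{\mathrm{rbco}(f,V)}\subseteq\overline{\mathrm{bco}(f)}$, which you promise to settle ``with the same tidy-subgroup estimates,'' and the implication that non-ergodicity of $f$ on a compact totally disconnected group yields a proper relatively open $f$-stable subgroup --- are genuinely nontrivial (neither follows formally from Theorem~\ref{tidythm}) and are precisely part of the content of \cite{WilNub} Theorem 4.12, so for those you should cite rather than improvise; with that understanding, your plan is complete and consistent with the paper's.
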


{
\begin{proof}
Assertions (i), (ii), (v), (vi),  (vii) are contained in \cite{WilNub} Theorem 4.12. For assertion (iii) and (iv), see  respectively Lemma~3.29 and Corollary~3.27 in \cite{BaumgartnerWillis}.
\end{proof}

\begin{proof}[Proof of Proposition~\ref{prop:AnisotropicResidual}]
It is clear that the image of $G^\dagger$ in $G/N$ is contained in $(G/N)^\dagger$. Thus if $G/N$ is anisotropic, then $G^\dagger \leq N$. 

For each $g \in G$, we denote by $\con(g / N)$  the inverse image in $G$ of the contraction group of $gN$ in $G/N$. By \cite[Theorem~3.8]{BaumgartnerWillis}, we have $\con(g / N) = \con(g)N$.  Therefore,  if $\con(g/N)/N$ is non-trivial, then $\con(g) $ is not contained in $N$. In particular, if $G/N$ is not anisotropic, then $G^\dagger$ is not contained in $N$.
\end{proof}

\section{Limits of contraction groups}

The goal in this section is prove Theorem~\ref{thm:connub:converge} from the introduction. This will be achieved after a series of technical preparations. 

We start with a  lemma which  is a more precise version of \cite[Lemma 14]{Willis94}. The lemma is used there in the proof that, if $U$ is tidy for $g$ then it is also tidy for every element in $gU$ and that the scale is constant on $gU$. 

\begin{lem}\label{lem:conjugator}
Let $g\in G$ and let $U$ be an open compact subgroup of $G$ that is tidy above for $g$. Then for every $u\in U$ there is $t\in U_+ \cap \con(g\inv)$ such that, for every $k\geq0$,
$$
t^{-1}(gu)^ktg^{-k} \in U.
$$
\end{lem}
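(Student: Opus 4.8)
The plan is to construct the conjugator $t$ explicitly and verify the containment $t^{-1}(gu)^k t g^{-k}\in U$ by induction on $k$, exploiting the tidy-above decomposition $U = U_+U_-$ at each step. First I would rewrite the element $(gu)^k g^{-k}$ as a telescoping product of conjugates of $u$: expanding, one gets
\[
(gu)^k g^{-k} = \prod_{i=0}^{k-1} g^{i} u\, g^{-i} = (g^{k-1}ug^{-(k-1)})\cdots(gug^{-1})\,u,
\]
so controlling the product amounts to controlling how each $g^i u g^{-i}$ fails to lie in $U$. Since $U$ is tidy above, each $u\in U$ factors as $u = u_+u_-$ with $u_+\in U_+$, $u_-\in U_-$; the $U_-$ part is the one that moves toward the identity under positive conjugation by $g$, while the $U_+$ part is the obstruction that accumulates. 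The idea is that the accumulated $U_+$-error is exactly what $t$ should absorb.

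\medskip

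Concretely, I would try to build $t$ as a limit (or eventually-constant value) of the partial ``$U_+$-overflow'' produced as $k$ grows. Using $U = U_+U_-$ and $g^{-1}U_+ g \supseteq U_+$ (equivalently $g U_+ g^{-1}\subseteq U_+$, which holds because $U_+=\bigcap_{i\ge 0}g^i(U)$), I would push all the $U_+$-parts to the left across the product. Each conjugation $g(\cdot)g^{-1}$ keeps $U_+$ inside $U_+$, so the leftover $U_+$ factors assemble into a single element of $U_+$; moreover, because these factors are increasingly deep conjugates $g^i(\cdot)$ with the $U_-$-pieces contracting, the overflow should stabilise and lie in $U_+\cap\con(g^{-1})$. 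That limiting element is the candidate $t$. After extracting $t$ on the left, what remains on the right should be a product living in $U$, which is precisely the claimed containment $t^{-1}(gu)^k t g^{-k}\in U$ once one checks that conjugating $t$ back through $g^{-k}$ and recombining stays inside the compact group $U$.

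\medskip

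The main obstacle I anticipate is the bookkeeping needed to show that the overflow element genuinely converges and lands in $U_+\cap\con(g^{-1})$ rather than merely in $U_+$. This requires combining two facts: that $U_+$ is a decreasing-under-$g$ (or increasing, depending on sign conventions) compact group so the partial products have a convergent subsequence, and that the contracting behaviour of the $U_-$-parts forces the tail of the product to be trivial modulo arbitrarily small neighbourhoods — this is where Theorem~\ref{tidythm}(iv), giving $U_+ = (\con(g^{-1})\cap U_+)U_0$, should be invoked to identify the limit as an element of $\con(g^{-1})$. A secondary technical point is verifying the inductive step cleanly: I would set up the induction so that the hypothesis ``$t_k^{-1}(gu)^k t_k g^{-k}\in U$'' for the $k$-th partial conjugator $t_k$ passes to $k+1$ by one application of the tidy-above factorisation, and then argue $t_k\to t$ with $t$ independent of $k$ (or eventually constant), so that a single $t$ works uniformly for all $k\ge 0$ as the statement demands.
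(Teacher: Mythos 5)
Your overall skeleton resembles the paper's actual proof: build partial conjugators lying in $U_+$, extract a limit using compactness of $U_+$, and finally invoke Theorem~\ref{tidythm}(iv) to adjust by an element of $U_0$ so that the limit lands in $U_+\cap\con(g\inv)$. But the central mechanism you propose for assembling the ``overflow'' rests on an inclusion that goes the wrong way, and this is exactly where the real difficulty of the lemma lies. With the paper's definition $U_+=\bigcap_{i\ge 0}g^i(U)$, one has $gU_+g\inv=\bigcap_{i\ge 1}g^i(U)\supseteq U_+$ and $g\inv U_+g\subseteq U_+$: conjugation by $g$ \emph{expands} $U_+$ (it is $U_-$ that it contracts). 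So your claim that ``each conjugation $g(\cdot)g^{-1}$ keeps $U_+$ inside $U_+$'' is false, and consequently the factors $g^iu_+g^{-i}$ in the telescoping product do not lie in $U_+$ --- in general not even in $U$ (if they did for all $i$, then $u_+$ would lie in $U_-\cap U_+=U_0$) --- and they do not stabilise: they are pushed further from the identity as $i$ grows. Hence the leftover $U_+$-parts cannot be collected into a single element of $U_+$ as you describe, and there is no reason for your partial overflows to converge.

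The paper's proof is designed precisely to get around this. At stage $n$ the new error $w_+\in U_+$ (coming from the tidy-above factorisation $ut_nb_{n,n}=w_-w_+$) occurs at depth $n$, and the correction to the conjugator is taken to be $y=g^{-n}w_+^{-1}g^n$: it is pulled back by the \emph{negative} power $g^{-n}$, which places it inside $g^{-n}U_+g^n\subseteq U_+$ (the correct inclusion), and at the same time does not disturb the conditions already secured for $k\le n$, because $g^kyg^{-k}\in U$ for all $0\le k\le n$. This pull-back by negative powers of $g$ is the key idea missing from your sketch; without it the inductive step cannot be closed. Two smaller points: your telescoping identity is off by one --- the correct expansion is $(gu)^kg^{-k}=(gug\inv)(g^2ug^{-2})\cdots(g^kug^{-k})$, not $\prod_{i=0}^{k-1}g^iug^{-i}$ (test $k=1$) --- and you should not expect a single ``eventually constant'' overflow: in the correct argument the partial conjugators $t_n$ genuinely vary, one passes to a convergent subsequence by compactness of $U_+$, and each condition $t^{-1}(gu)^ktg^{-k}\in U$ survives the limit because $U$ is closed. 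That final limiting step and the use of Theorem~\ref{tidythm}(iv) in your plan are sound, but only once the inductive mechanism is repaired.
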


\begin{proof}
It will be shown by induction that for every $n\geq0$ there is $t_n\in U_+$ with 
\begin{equation}
\label{eq:conjugator}
t_n^{-1}(gu)^kt_n = b_{n,k}g^k,\text{ with }b_{n,k}\in U,\text{ for every }k\in\{0,1,\dots, n\}.
\end{equation}

The base case of the induction, when $n=0$, is certainly true. Suppose that (\ref{eq:conjugator}) has been established for $n$. The element $t_{n+1}$ will be constructed as $t_{n+1} = t_ny$, where $y\in g^{-n}U_+g^n$. 

Notice first that, for any choice of $y \in g^{-n}U_+g^n$ and all $k\in \{0,1,\dots, n\}$, we have
$$
t_{n+1}^{-1}(gu)^kt_{n+1} = y^{-1}b_{n,k}g^ky = b_{n+1,k}g^k
$$
where $b_{n+1,k} = y^{-1}b_{n,k}g^kyg^{-k}$ belongs to $U$. Now, to find suitable $y$, consider
$$
t_{n+1}^{-1}(gu)^{n+1}t_{n+1} =  y^{-1}t_n^{-1}(gu)(gu)^nt_ny = y^{-1}t_n^{-1}g(ut_nb_{n,n})g^ny.
$$
Since $ut_nb_{n,n}$ belongs to $U$ which is tidy above for $g$, we have $ut_nb_{n,n} = w_-w_+$ with $w_{\pm}\in U_{\pm}$. Put $y = g^{-n}w_+^{-1}g^n$. Then
$$
 t_{n+1}^{-1}(gu)^{n+1}t_{n+1} = t_{n+1}^{-1}gw_-g^n = b_{n+1,n+1}g^{n+1}
 $$
 with $b_{n+1,n+1} = y^{-1}t_n^{-1}gw_-g^{-1}$ in $U$ as required.
 
 Since $U_+$ is compact, the sequence $(t_n)_{n \in \bN}$ has a subsequence converging to an element $s \in U_+$.  We see that
 $$
s^{-1}(gu)^ksg^{-k} \in U \quad \forall k \ge 0
$$
for this choice of $s$.  Moreover, note that we can freely replace $s$ with $sv$ given some $v \in U_0$, since $U_0$ is normalised by $\langle g \rangle$ and hence  $[g^l,v] \in U$ for all $l \in \bZ$.  Now by Theorem~\ref{tidythm}(iv), we have
\[ U_+ = (U_+ \cap \con(g\inv))U_0,\]
hence by a suitable choice of $v$, we obtain $t = sv \in U_+ \cap \con(g\inv)$ with the desired property.
\end{proof}

\begin{cor}\label{cor:conjugate_contraction}
Let $g$, $u$ and $t$ be as in Lemma~\ref{lem:conjugator}. Then $t\con(g)t^{-1} = \con(gu)$.  
\end{cor}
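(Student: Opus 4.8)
The plan is to establish both inclusions $t\con(g)t^{-1}\subseteq\con(gu)$ and $\con(gu)\subseteq t\con(g)t^{-1}$ directly from the relation furnished by Lemma~\ref{lem:conjugator}, combined with one soft fact about conjugation by elements of a compact set. That fact is: if $K\subseteq G$ is compact, $a_n\to1$ in $G$, and $c_n\in K$ for all $n$, then $c_na_nc_n^{-1}\to1$. Indeed, given a neighbourhood $V$ of $1$, compactness of $K$ yields a neighbourhood $W$ of $1$ with $cWc^{-1}\subseteq V$ for all $c\in K$, and then $a_n\in W$ for $n$ large forces $c_na_nc_n^{-1}\in V$.

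First I would rewrite the conclusion of Lemma~\ref{lem:conjugator}. Setting $c_k:=t^{-1}(gu)^ktg^{-k}$, the lemma asserts $c_k\in U$ for every $k\ge0$; equivalently $t^{-1}(gu)^kt=c_kg^k$, and hence also $g^k=c_k^{-1}t^{-1}(gu)^kt$ and $g^{-k}=t^{-1}(gu)^{-k}tc_k$. These three forms of the same identity are what drive the two inclusions.

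For $t\con(g)t^{-1}\subseteq\con(gu)$, take $x\in\con(g)$ and use $(gu)^nt=tc_ng^n$ to compute
$$(gu)^n(txt^{-1})(gu)^{-n}=t\,c_n\,(g^nxg^{-n})\,c_n^{-1}\,t^{-1}.$$
Since $x\in\con(g)$ we have $g^nxg^{-n}\to1$, and as each $c_n$ lies in the compact set $U$, the soft fact gives $c_n(g^nxg^{-n})c_n^{-1}\to1$; conjugating by the fixed element $t$ preserves convergence to $1$, so $txt^{-1}\in\con(gu)$. For the reverse inclusion I would run the analogous computation: for $y\in\con(gu)$, using the rewritten expressions for $g^{\pm n}$,
$$g^n(t^{-1}yt)g^{-n}=c_n^{-1}\,\bigl(t^{-1}\,(gu)^ny(gu)^{-n}\,t\bigr)\,c_n.$$
Now $(gu)^ny(gu)^{-n}\to1$ because $y\in\con(gu)$, conjugation by the fixed $t$ keeps this tending to $1$, and the soft fact with the compact set $U$ (again containing all $c_n$) shows the right-hand side tends to $1$; hence $t^{-1}yt\in\con(g)$.

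There is no deep obstacle here: the entire content lies in noticing that the single relation $c_k\in U$ (for $k\ge0$) symmetrically controls both directions, and that compactness of $U$ upgrades the pointwise convergences $g^nxg^{-n}\to1$ and $(gu)^ny(gu)^{-n}\to1$ to the conjugated convergences above uniformly in $n$. The only points requiring care are the bookkeeping of the conjugation identities so that the factors of $t$ and $c_n$ cancel as written, and the observation that the limits involved are taken as $n\to+\infty$ (i.e.\ over $n\ge0$), precisely the range in which Lemma~\ref{lem:conjugator} guarantees $c_k\in U$.
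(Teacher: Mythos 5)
Your proof is correct and follows essentially the same route as the paper: both arguments use the identity $(gu)^k t = t b_k g^k$ with $b_k \in U$ from Lemma~\ref{lem:conjugator} to derive the two symmetric conjugation formulas and deduce both inclusions. The only cosmetic difference is in justifying that conjugation by elements of $U$ preserves convergence to $1$: you use a general compactness/uniformity argument, while the paper invokes the fact that the compact open subgroup $U$ has a neighbourhood base of the identity consisting of subgroups normal in $U$; both are valid.
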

\begin{proof}
For $k\geq0$, there is $b_k \in U$ such that $(gu)^k t = t b_k g^k$. Therefore, for each $c\in \con(g)$, we have
$$
(gu)^k(tct^{-1})(gu)^{-k} = tb_k ( g^k c g^{-k} ) (tb_k)^{-1}.
$$ 
 The right side converges to the identity as $k\to\infty$ because $tb_k$ belongs to $U$, which has a  neighbourhood base of the identity comprising  normal subgroups. Hence $t\con(g)t\inv \leq \con(gu)$.  Similarly, given $c \in \con(gu)$, then, for $k\geq0$, $$
g^k (t\inv ct) g^{-k} = b\inv_kt\inv((gu)^k c (gu)^{-k} )tb_k
$$ 
showing that $t\inv c t \in \con(g)$, so $\con(g) \ge t\inv\con(gu)t$ and hence $t\con(g)t^{-1} = \con(gu)$.
\end{proof}

\begin{lem}\label{lem:conjugator2}
Let $g\in G$ and let $U$ be an open compact subgroup of $G$ that is tidy above for $g$.  Then for every $u\in U\cap g^{-1}Ug$ there is $r\in U$ such that, for every $k\in\mathbf{Z}$,
\begin{equation}
\label{eq:conjugator2}
r^{-1}(gu)^kr = b_kg^k\text{ with }b_k\in U.
\end{equation}
\end{lem}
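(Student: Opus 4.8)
The plan is to apply Lemma~\ref{lem:conjugator} twice: first to $(g,u)$ in order to gain control of the nonnegative powers of $gu$, and then to the inverse automorphism in order to control the negative powers, arranging matters so that the second conjugation does not spoil the first.

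First I would invoke Lemma~\ref{lem:conjugator} for $g$ and $u$, which is legitimate since $u\in U$. This produces $t\in U_+\cap\con(g\inv)$ with $t\inv(gu)^k t\in Ug^k$ for every $k\geq0$. Writing $h:=t\inv(gu)t$, we obtain $h^k\in Ug^k$ for all $k\geq0$; in particular $h=b_1 g$ with $b_1:=hg\inv\in U$, and hence $h\inv=g\inv b_1\inv$ with $b_1\inv\in U$. The hypothesis $u\in U\cap g\inv Ug$ enters here in spirit: it makes the situation symmetric, since it already gives $gu\in Ug$ and hence $(gu)\inv=g\inv(gu\inv g\inv)$ with $gu\inv g\inv\in U$; but the chaining through $b_1$ will work in any case.

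Next I would observe that tidiness above is invariant under inversion: from $U=U_+U_-$ one gets $U=U\inv=U_-U_+$, so $U$ is also tidy above for $g\inv$, its ``$U_+$'' for $g\inv$ being exactly $U_-$. I would therefore apply Lemma~\ref{lem:conjugator} to the automorphism $g\inv$ and the element $b_1\inv\in U$. Since $g\inv b_1\inv=h\inv$, this yields $s\in U_-\cap\con(g)$ such that $s\inv h^{-k}s\in Ug^{-k}$ for every $k\geq0$; that is, the negative powers of $h$ are now under control. Finally I would set $r:=ts$, noting $r\in U_+U_-=U$ by tidiness above, as required. For $k\geq0$ we have $r\inv(gu)^k r=s\inv h^k s=s\inv b_k(g^k s g^{-k})g^k$ after writing $h^k=b_kg^k$; and for $k\geq0$ the negative power gives $r\inv(gu)^{-k}r=s\inv h^{-k}s\in Ug^{-k}$ directly from the second application, yielding the required $b_k\in U$ for all $k\in\bZ$.

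The main obstacle is exactly the potential interference between the two conjugations: a priori, conjugating $h$ by $s$ to tame its negative powers could destroy the relation $h^k\in Ug^k$ already achieved for $k\geq0$. What rescues the argument is the refinement built into Lemma~\ref{lem:conjugator}, namely that the conjugator it returns lies in the relevant $U_+$ (here the $U_+$ for $g\inv$, which is $U_-$). Membership of $s$ in $U_-$ forces $g^k s g^{-k}\in U$ for all $k\geq0$, so that $s\inv b_k(g^k s g^{-k})\in U$ and the positive-power control survives the second conjugation. Thus the key point to check carefully is this preservation step; once it is in place, the two one-sided statements fuse into the two-sided relation \eqref{eq:conjugator2}.
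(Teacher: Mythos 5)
Your proof is correct, and while it shares the paper's overall strategy --- two applications of Lemma~\ref{lem:conjugator}, one for $g$ and one for $g^{-1}$, combined with the fact that elements of $U_-$ stay in $U$ under conjugation by nonnegative powers of $g$ --- it differs in how the two applications are set up and merged, and the difference is substantive. The paper makes its second application to the pair $(g^{-1}, gu^{-1}g^{-1})$, which is exactly where the hypothesis $u \in g^{-1}Ug$ is consumed (it guarantees $gu^{-1}g^{-1}\in U$); it then has two independent conjugators $t\in U_+$ and $s\in U_-$, which it reconciles by factoring $s^{-1}t = v_+v_-$ via tidiness above and setting $r = tv_-^{-1} = sv_+$, so that conjugation by $r$ differs from conjugation by $t$ by an element of $U_-$ (harmless for $k\ge 0$) and from conjugation by $s$ by an element of $U_+$ (harmless for $k\le 0$). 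Your chaining variant instead feeds the output of the first application into the second: since $b_1 = t^{-1}(gu)tg^{-1}$ lies in $U$ automatically, the element $h^{-1} = g^{-1}b_1^{-1}$ is already in the form required by Lemma~\ref{lem:conjugator} for $g^{-1}$, so the conjugators simply compose as $r = ts$. You also correctly identify and verify the crux: the second conjugation cannot destroy the positive-power control because $s$ lies in the ``$U_+$'' of $g^{-1}$, namely $U_-$, whence $g^ksg^{-k}\in U$ for all $k\ge 0$. A pleasant byproduct of your route is that it proves a slightly stronger statement than Lemma~\ref{lem:conjugator2}: the conclusion \eqref{eq:conjugator2} holds for \emph{every} $u\in U$, the hypothesis $u\in g^{-1}Ug$ being an artifact of the paper's particular second application rather than a genuine necessity.
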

\begin{proof}
Applying Lemma~\ref{lem:conjugator} yields $t\in U_+$ such that  (\ref{eq:conjugator2}) holds for $k\geq0$ with $r=t$. Applying the lemma a second time replacing $g$ by $g^{-1}$ and $u$ by $gu^{-1}g^{-1}$, which belongs to $U$, yields $s\in U_-$ such that (\ref{eq:conjugator2}) holds for $k\leq 0$ with $r=s$. 

Since $s^{-1}t$ belongs to $U$, it is equal to $v_+v_-$ with $v_\pm\in U_\pm$. Put $r = tv_-^{-1} = sv_+$. Then for $k\geq0$
$$
r^{-1}(gu)^kr = v_-t^{-1}(gu)^ktv_-^{-1} = v_-b_kg^k v_-^{-1} = b_k'g^k
$$
with $b'_k = v_-b_kg^k v_-^{-1}g^{-k}$, which belongs to $U$, and for $k\leq0$
$$
r^{-1}(gu)^kr = v_+^{-1}s^{-1}(gu)^ksv_+ = v_+^{-1}b_kg^k v_+ = b_k'g^k
$$
with $b_k' = v_+^{-1}b_kg^k v_+g^{-k}$ which belongs to $U$. 
\end{proof}

\begin{cor}
\label{cor:nub_conjugate}
Let $g$, $u$ and $r$ be as in Lemma~\ref{lem:conjugator2}. Then $r\nub(g)r^{-1} = \nub(gu)$.  

If $u_n\to 1_G$, then $r_n$ with $r_n\nub(g)r_n^{-1} = \nub(gu_n)$ may be chosen for each $n$ such that $r_n\to 1_G$ as $n\to\infty$.
\end{cor}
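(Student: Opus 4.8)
The plan is to establish the first assertion of Corollary~\ref{cor:nub_conjugate} by imitating the argument of Corollary~\ref{cor:conjugate_contraction}, and then to handle the second assertion by bookkeeping the norms of the various conjugators produced along the way. For the first assertion, I would use the characterisation $\nub(f) = \overline{\con(f)} \cap \overline{\con(f\inv)}$ from Theorem~\ref{wilnub}(iv). By Lemma~\ref{lem:conjugator2} the element $r$ satisfies $r^{-1}(gu)^k r = b_k g^k$ with $b_k \in U$ for \emph{all} $k \in \mathbf{Z}$, not just $k \geq 0$; this two-sided control is exactly what distinguishes the present situation from Corollary~\ref{cor:conjugate_contraction}. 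Running the conjugation estimate of Corollary~\ref{cor:conjugate_contraction} for $k \to +\infty$ gives $r\,\overline{\con(g)}\,r^{-1} = \overline{\con(gu)}$ (conjugation is a homeomorphism, so it commutes with closure), and running it for $k \to -\infty$, using the same $r$ and the relation for negative $k$, gives $r\,\overline{\con(g^{-1})}\,r^{-1} = \overline{\con((gu)^{-1})}$. Intersecting the two identities yields $r\,\nub(g)\,r^{-1} = \nub(gu)$.

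For the second assertion, I would unwind how $r$ depends on $u$. Given $u_n \to 1_G$, fix an open compact $U$ that is tidy above for $g$; for $n$ large enough $u_n \in U \cap g^{-1}Ug$, so Lemma~\ref{lem:conjugator2} applies and produces $r_n \in U$. The goal is to arrange $r_n \to 1_G$. I would trace the construction: Lemma~\ref{lem:conjugator2} builds $r$ from the elements $t \in U_+$ (from Lemma~\ref{lem:conjugator} applied to $u$) and $s \in U_-$ (from Lemma~\ref{lem:conjugator} applied to $g^{-1}$ and $g u^{-1} g^{-1}$). So the real task is to show that the conjugators $t_n, s_n$ coming out of Lemma~\ref{lem:conjugator} can be taken to tend to $1$ as $u_n \to 1$; once $t_n, s_n \to 1$, the decomposition $s_n^{-1} t_n = v_{+,n} v_{-,n}$ forces $v_{\pm,n} \to 1$ (since $s_n^{-1}t_n \to 1$ and the factorisation map on the open set $U_+ U_-$ is continuous near $1$, or one can argue directly using that any net in the compact group has convergent subnets whose limits must factor $1=1\cdot 1$), hence $r_n = t_n v_{-,n}^{-1} \to 1$.

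The main obstacle, and the step I would spend the most care on, is precisely this continuity-at-$u=1$ claim for the conjugator of Lemma~\ref{lem:conjugator}: that claim is not literally stated, because Lemma~\ref{lem:conjugator} only asserts existence of $t$ with no control on its size. I would handle this by a compactness/diagonal argument rather than by tracking the inductive construction explicitly. Suppose, for contradiction, that $r_n \not\to 1_G$; since all $r_n$ lie in the compact group $U$, after passing to a subnet we may assume $r_n \to r_\infty \in U$ with $r_\infty \neq 1_G$ in the sense that $r_\infty$ escapes some fixed open identity neighbourhood. Passing to the limit in the defining relation $r_n^{-1}(gu_n)^k r_n = b_{n,k} g^k$ (for each fixed $k$, using $u_n \to 1$ and continuity of the group operations) gives $r_\infty^{-1} g^k r_\infty = b_{\infty,k} g^k$ with $b_{\infty,k} \in U$ for all $k \in \mathbf{Z}$; that is, $r_\infty^{-1} g^k r_\infty g^{-k} \in U$ for all $k$, so $r_\infty$ normalises the situation in a way that makes it absorbable into $U_0$. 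Using Theorem~\ref{tidythm}(iv) as in the end of Lemma~\ref{lem:conjugator}, such a limit conjugator may be adjusted by an element of $U_0$ to equal $1$, and the freedom to multiply each $r_n$ by an element of $U_0$ (which, as noted in Lemma~\ref{lem:conjugator}, does not affect the required relations) then lets us rechoose the $r_n$ so that $r_n \to 1_G$, completing the proof.
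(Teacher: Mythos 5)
Your proof of the first assertion is correct, and differs from the paper's only cosmetically: where you conjugate $\con(g)$ and $\con(g^{-1})$ separately (running the estimate of Corollary~\ref{cor:conjugate_contraction} once for $k\to+\infty$ and once for $k\to-\infty$) and then intersect closures via Theorem~\ref{wilnub}(iv), the paper conjugates the single set $\mathrm{bco}(g)=\con(g)\cap\prb(g^{-1})$ and invokes Theorem~\ref{wilnub}(ii). Both are immediate consequences of the two-sided relation of Lemma~\ref{lem:conjugator2}.

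The second assertion is where you have a genuine gap. Your contradiction argument does not close. From $r_n\not\to 1$ you extract a subnet converging to some $r_\infty$, and you correctly conclude $r_\infty\in U_0$ (indeed more easily than via Theorem~\ref{tidythm}(iv): since $r_\infty\in U$ and $r_\infty^{-1}g^k r_\infty g^{-k}\in U$, we get $g^k r_\infty g^{-k}\in r_\infty U=U$ for all $k$, so $r_\infty\in\bigcap_{k}g^kUg^{-k}=U_0$). But this is not a contradiction: $U_0$ is a compact group that has no reason to be contained in the neighbourhood that the $r_n$ escape, so ``$r_\infty\in U_0$'' is perfectly compatible with ``$r_n\not\to 1$''. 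What your argument really shows is that every cluster point of $(r_n)$ lies in $U_0$, equivalently that $r_nU_0\to U_0$ in $U/U_0$. The final step --- ``the freedom to multiply each $r_n$ by an element of $U_0$ then lets us rechoose the $r_n$ so that $r_n\to 1$'' --- is precisely the unproved content: one must lift this convergence through the quotient map $U\to U/U_0$, i.e.\ produce $v_n\in U_0$ with $r_nv_n\to 1$. That is not automatic. The natural attempt (choose $v_n$ with $r_nv_n\in W$ simultaneously for all open normal $W\trianglelefteq U$ satisfying $r_n\in WU_0$) fails because $WU_0\cap W'U_0$ can be strictly larger than $(W\cap W')U_0$, so the relevant closed sets need not have the finite intersection property; a diagonal choice over a countable base repairs this when $U$ is metrizable, but the paper deliberately works without metrizability hypotheses. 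The same blind spot --- treating data as well defined when it is only well defined modulo $U_0$ --- also invalidates your parenthetical claim that $s_n^{-1}t_n\to 1$ forces $v_{\pm,n}\to 1$: the factorisation $U=U_+U_-$ is unique only up to $U_0$, since $U_+\cap U_-=U_0$, so the limits need only factor $1=a\cdot a^{-1}$ with $a\in U_0$.

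The intended argument avoids all of this by not fixing $U$: Theorem~\ref{tidythm}(i) guarantees that \emph{every} neighbourhood of the identity contains a compact open subgroup tidy above for $g$. Given $u_n\to 1$, apply Lemma~\ref{lem:conjugator2} with subgroups $U_n$ tidy above for $g$ shrinking to the identity, arranging $u_n\in U_n\cap g^{-1}U_ng$; the lemma then produces $r_n\in U_n$, so $r_n\to 1$ with no further work. This is exactly how the corollary is deployed in the paper's proof of Theorem~\ref{thm:connub:converge}.
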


\begin{proof}
By Theorem~\ref{wilnub} (ii) we have $\nub(g) = \overline{\mathrm{bco}(g)}$.  It follows from Lemma~\ref{lem:conjugator2} that $r\mathrm{bco}(g)r^{-1} = \mathrm{bco}(gu)$, whence the claim.
\end{proof}

\begin{proof}[Proof of Theorem~\ref{thm:connub:converge}]
Set $u_n := g\inv g_n$, so that $u_n \to 1$ as $n \to \infty$.  The tidying procedure (Theorem~\ref{tidythm}(i)) ensures that every open neighbourhood   of the identity contains a compact open subgroup that is tidy above for $g$. In particular we may find a (possibly uncountable) chain $(U_n)$ of open compact subgroups, all tidy above for $g$, with $\bigcap^\infty_n U_n = 1$.   Upon replacing $(g_n)$ by a subnet, we may assume that $u_n \in U_n$ for all $n$.   Now  Corollary~\ref{cor:conjugate_contraction} yields elements $t_n \in (U_n)_+$ such that $t_n\con(g)t\inv_n = \con(gu_n) = \con(g_n)$.  We have $t_n \to 1$ as $n \to \infty$ since $(U_n)_+$ is contained in $U_n$.  Similarly,  Corollary~\ref{cor:nub_conjugate} yields elements $r_n \in G$ such that $r_n\nub(g)r\inv_n = \nub(gu_n) = \nub(g_n)$ and such that $r_n \to 1$ as $n \to \infty$.
\end{proof}

\section{Normal closures}

We now establish a connection between the abstract normal closure of an element $f$ and the topological closure of its contraction group.

\begin{prop}\label{prop:ContractionGroup:NC}
Let $G$ be a \tdlc group and let $A$ be an abstract subgroup of $G$.  Let $g \in A$ and suppose that $K \le \N_G(A)$, where $K$ is a closed subgroup of $\overline{\con(g)}$ such that $gKg\inv = K$.  Then $K \le A$.  In particular, any (abstract) normal subgroup of $G$ containing $g$ also contains $\overline{\con(g)}$.
\end{prop}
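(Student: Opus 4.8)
The plan is to reduce the final assertion to the main statement and then exploit a telescoping identity together with the contraction dynamics. For the ``in particular'' clause, one takes $K=\overline{\con(g)}$: this is closed, it is normalised by $g$ (since $g\con(g)g\inv=\con(g)$ directly from the definition of the contraction group), and if $A$ is normal in $G$ then $K\le G=\N_G(A)$, so the hypotheses of the first assertion are met and yield $\overline{\con(g)}\le A$.

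For the main assertion, fix $u\in K$. The key algebraic observation is that for every $n\in\bZ$,
\[
[u,g^n]=(ug^nu\inv)g^{-n}\in A,
\]
because $u\in\N_G(A)$ and $g^n\in A$; moreover $[u,g^n]\in K$ since $K$ is closed under conjugation by $g$. A telescoping computation rewrites $u\,(g^nug^{-n})\inv$ as the product $\prod_{k=0}^{n-1}g^k[u,g]g^{-k}$, so that $u\,(g^nug^{-n})\inv\in A\cap K$ for all $n\ge 0$ (and, arguing with $g\inv$ in place of $g$, for all $n\in\bZ$). Consequently $A\cap K$ is a $g$-invariant normal subgroup of $K$ containing $[K,\langle g\rangle]$, and $g$ acts trivially on the quotient $K/(A\cap K)$. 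When $u\in\con(g)$ we have $g^nug^{-n}\to 1$, hence $u\,(g^nug^{-n})\inv\to u$, giving $\con(g)\cap K\subseteq\overline{A\cap K}$, and symmetrically $\con(g\inv)\cap K\subseteq\overline{A\cap K}$.

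Two further inputs then finish the proof, and the second of these I expect to be the main obstacle. First, $A\cap K$ is \emph{open} in $K$: this is what upgrades the limit statements above to genuine membership, for once $A\cap K$ contains an open subgroup of $K$ the contracting tail $g^nug^{-n}$ eventually lies in $A\cap K$, whence $u=\big(u(g^nug^{-n})\inv\big)(g^nug^{-n})\in A$. Establishing this openness is the delicate point, and I would extract it from the tidy-subgroup machinery of Theorem~\ref{tidythm}, showing that the commutators $[K,\langle g\rangle]$ already fill a compact open subgroup of $K$. Second, $\overline{[K,\langle g\rangle]}=K$, i.e. $g$ admits no nontrivial quotient of $K$ on which it acts trivially: on $K/(\nub(g)\cap K)$ the automorphism $g$ behaves as a contraction, so any trivial-action quotient is killed by the relation $x\equiv g^nxg^{-n}\to 1$, while on the compact part $\nub(g)\cap K$ one invokes that $g$ acts ergodically on the nub (Theorem~\ref{wilnub}(vii)), ergodicity ruling out nontrivial quotients with trivial action. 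Combining the two inputs, $A\cap K$ is open, hence closed, and dense in $K$, so $A\cap K=K$ and therefore $K\le A$.
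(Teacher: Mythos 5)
Your route is genuinely different from the paper's: there, one passes to $H=\langle K\cup\{g\}\rangle\cong K\rtimes\langle g\rangle$, shows that the abstract normal closure $N$ of $g$ in $H$ contains $\nub(g)$ using \cite[Prop.~7.1]{WilNub}, and then deduces $N=H$ from the Tree Representation Theorem \cite[Theorem~4.2]{BaumgartnerWillis}; you instead try to make $A\cap K$ open and dense in $K$. Your preparatory steps are all correct: $[u,g^n]\in A\cap K$ for $u\in K$, hence $A\cap K$ is a $g$-invariant normal subgroup of $K$ containing $[K,\langle g\rangle]$, and $\con(g)\cap K$ and $\con(g\inv)\cap K$ lie in $\overline{A\cap K}$. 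But the difficulty is entirely concentrated in your two ``inputs'': the first you leave unproved, and your argument for the second is wrong in a way that cannot be repaired.

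The fatal gap is the ergodicity step. Ergodicity of $g$ on $\nub(g)$ (Theorem~\ref{wilnub}(vii)) passes to $g$-invariant \emph{quotients} of $\nub(g)$, but not to closed $g$-invariant \emph{subgroups} such as $\nub(g)\cap K$, and the latter is what your Input~2 needs. Concretely, let $F$ be a non-trivial finite group and $G=F^{\bZ}\rtimes\langle g\rangle$, with $g$ acting by the shift on the compact open subgroup $F^{\bZ}$. Then $\con(g)$ consists of the sequences that are eventually trivial at one end, so $\overline{\con(g)}=\nub(g)=F^{\bZ}$ and $g$ does act ergodically on it; yet the diagonal subgroup $K=\Delta\cong F$ is closed, $g$-invariant and centralised by $g$, so $[K,\langle g\rangle]=1$ and $\overline{[K,\langle g\rangle]}\neq K$. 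Worse, taking $A=\langle g\rangle$ (which $\Delta$ centralises, so $K\le\N_G(A)$), every stated hypothesis of the Proposition holds while $K\not\le A$; so in this generality the statement itself fails, and no argument can close your gap without using that $\con(g)\cap K$ is dense in $K$. That density holds in the only case the paper ever applies, namely $K=\overline{\con(g)}$, and the paper's own proof tacitly restricts to that case at the step ``we may assume $H=G$, so that $K=\overline{\con(g)}$'' (since $\con_H(g)=\con_G(g)\cap K$, this assertion is exactly the density hypothesis); the ``in particular'' clause and the proof of Theorem~\ref{thm:condensenorm} only need this case. Finally, even granting $K=\overline{\con(g)}$, your Input~1 carries the full weight of the theorem: density of $[K,\langle g\rangle]$ in $K$ is then automatic from your own limit computation, so openness of $A\cap K$ would already force $A\cap K=K$, in particular $\nub(g)\le A$, which is precisely the hard statement for which the paper needs \cite[Prop.~7.1]{WilNub} and the tree representation; nothing in the tidy-subgroup calculus of Theorem~\ref{tidythm} produces it, since that machinery sees the nub only as the intersection of all tidy subgroups.
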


\begin{proof}
The cyclic group generated by $g$ is discrete or relatively compact. In the latter case, $\con(g)=1$. The desired assertion is thus empty in this case. We assume henceforth that the cyclic group generated by $g$ is discrete. This implies that the subgroup  $H= \langle K \cup \{g \} \rangle$ is closed in $G$ and splits as
$$H \cong K \rtimes \langle g \rangle.$$ 
Let $N$ be the (abstract) normal closure of $g$ in $H$.  From now on we may assume $H = G$, so that $K = \overline{\con(g)}$. 

By \cite[Corollary~3.30]{BaumgartnerWillis}, we have $\overline{\con(g)} = \nub(g)\con(g)$. Moreover, it follows from \cite[Prop.~7.1]{WilNub} that $N$ contains $\nub(g)$. 

We now invoke the Tree Representation Theorem from \cite[Theorem~4.2]{BaumgartnerWillis}. This provides a locally finite tree $T$ and a continuous homomorphism 
$\rho \colon H \to \Aut(T)$ enjoying the following properties: 
\begin{itemize}
\item $\rho(f)$ acts as a hyperbolic isometry with attracting fixed point $\xi_+ \in \bd T$ and repelling fixed point $\xi_- \in \bd T$;  

\item $\rho(H)$ fixes $\xi_-$ and is transitive on $\bd T \setminus  \{\xi_-\}$;

\item the stabiliser $H_{ \xi_+}$ coincides with $\nub(f) \rtimes \langle f \rangle$. 
\end{itemize}
 
Any element $h \in H$ acting as a hyperbolic isometry fixes exactly two ends of $T$; one of them must thus be $\xi_-$. Since $H$ is   transitive on $\bd T \setminus  \{\xi_-\}$, it follows that some conjugate of $h$ is contained in $ H_{ \xi_+} $. We have seen that  $N$ contains $\nub(g) \rtimes \langle g \rangle = H_{ \xi_+} $. We infer that $N$ contains all elements of $H$ acting as hyperbolic isometries on $T$. 

Let now $\eta \in \bd T \setminus  \{\xi_-\}$. There is some $h \in H $ such that $\rho(h).\xi_+ = \eta$. Using again the fact that $\rho(H)$ fixes $\xi_-$, we remark that if $h$ is not a hyperbolic isometry, then $hg$ is a hyperbolic isometry. Moreover we have $\rho(hg).\xi_+ = \eta$. Recalling that $N$ contains all hyperbolic isometries of $H$, we infer that $N$ is transitive on $\bd T  \setminus  \{\xi_-\}$.  Therefore $N = H$ since $N$ also contains $H_{\xi_+}$.

This proves that the normal closure of $g$ in $H$ contains $K$.  Since by assumption $H$ normalises $A$, we conclude that $K \le A$.\end{proof}

By combining Corollary~\ref{cor:conjugate_contraction} and Proposition~\ref{prop:ContractionGroup:NC}, we arrive at a proof of Theorem~\ref{thm:condensenorm}.

\begin{proof}[Proof of Theorem~\ref{thm:condensenorm}]
Let $g \in G$.  Since $D$ is dense in $G$, there is a net $(g_n)$ in $D$ that converges to $g$.  By hypothesis, we have $\overline{\con(g_n)} \leq G^\dagger \leq \N_G(D)$ for all $n$. Applying  Proposition~\ref{prop:ContractionGroup:NC} with $K = \overline{\con(g_n)}$ and $A=D$, we infer that $\overline{\con(g_n)}$ is contained in $D$ for all $n$.  Moreover, by Corollary~\ref{cor:conjugate_contraction}, for some $n$ there is $t \in \con(g\inv)$ such that $t\con(g)t\inv = \con(g_n)$, and consequently $t\overline{\con(g)}t\inv = \overline{\con(g_n)}$.  Since $t \in \con(g\inv) \le \N_G(D)$ by assumption, it follows that $\overline{\con(g)} \le D$.  As $g$ was arbitrary, we conclude that $G^\dagger \le D$.
\end{proof}

We point out the following refinement of Corollary~\ref{cor:Titscore:abssimple}.

\begin{cor}\label{cor:abssim}Let $G$ be a \tdlc group.  Then the following are equivalent:
\begin{enumerate}[(i)]
\item $G$ is topologically simple, and $\con(g) \not=1$ for some $g \in G$;
\item $G^\dagger$ is abstractly simple and dense in $G$, and the centre of $G$ is trivial.
\end{enumerate}
\end{cor}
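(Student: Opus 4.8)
The plan is to prove the two implications separately, drawing on the machinery already assembled. Throughout, recall that $G^\dagger = \langle \overline{\con(g)} \mid g \in G\rangle$ and that $G^\dagger$ is characteristic in $G$.

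\textbf{From (i) to (ii).} Assume $G$ is topologically simple and $\con(g)\neq 1$ for some $g$. Since $\con(g)\neq 1$ we have $\overline{\con(g)}\neq 1$, so $G^\dagger$ is non-trivial. Being a non-trivial characteristic subgroup, its closure $\overline{G^\dagger}$ is a non-trivial closed normal subgroup of $G$, hence $\overline{G^\dagger}=G$ by topological simplicity; thus $G^\dagger$ is dense. Abstract simplicity of $G^\dagger$ is exactly Corollary~\ref{cor:Titscore:abssimple}. For the centre: the centre $\Z(G)$ is a closed normal subgroup (it is the intersection of centralisers of elements, each closed), and it is proper because $G$ is non-discrete and non-abelian—indeed a non-trivial $\con(g)$ forces $G$ to be non-discrete, and if $G$ were abelian then every contraction group would be trivial. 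So $\Z(G)\neq G$, and topological simplicity forces $\Z(G)=1$.

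\textbf{From (ii) to (i).} Assume $G^\dagger$ is abstractly simple and dense, with $\Z(G)=1$. First, $G^\dagger$ dense and abstractly simple forces $G^\dagger\neq 1$, so some $\overline{\con(g)}$ is non-trivial, giving $\con(g)\neq 1$ for some $g$; this is the second half of (i). The substantive point is topological simplicity. Let $N\neq 1$ be a closed normal subgroup of $G$; I must show $N=G$, equivalently $\overline{N}=G$. The idea is to feed $N$ into the normalising structure: $G^\dagger$ normalises $N$ (as $G^\dagger\trianglelefteq G$), so I would like to conclude via the abstract simplicity of $G^\dagger$ that $G^\dagger\le N$. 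Concretely, consider $N\cap G^\dagger$: it is a normal subgroup of $G^\dagger$, so by abstract simplicity it is either trivial or all of $G^\dagger$. In the latter case $G^\dagger\le N$, and then $G=\overline{G^\dagger}\le \overline{N}=N$ (using $N$ closed), so $N=G$ and we are done. The remaining task is to rule out $N\cap G^\dagger=1$.

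\textbf{The main obstacle} is precisely excluding $N\cap G^\dagger=1$ for a non-trivial closed normal $N$. Here I would argue that if $N\cap G^\dagger=1$ then $G^\dagger$ centralises $N$: since both $N$ and $G^\dagger$ are normal in $G$, the commutator $[N,G^\dagger]$ lies in $N\cap G^\dagger=1$, so every element of $N$ centralises $G^\dagger$. Consequently $N$ lies in the centraliser $\CC_G(G^\dagger)$. But $G^\dagger$ is dense, so $\CC_G(G^\dagger)=\CC_G(\overline{G^\dagger})=\CC_G(G)=\Z(G)=1$, using that centralisers are closed and that centralising a dense subgroup is the same as centralising its closure. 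This forces $N=1$, contradicting $N\neq 1$. Hence $N\cap G^\dagger=G^\dagger$ and the argument above closes. The one place demanding care is the passage $\CC_G(G^\dagger)=\CC_G(\overline{G^\dagger})$, which relies on the continuity of the group operations together with the closedness of centralisers; I expect this to be routine once stated, and the triviality of $\Z(G)$ then delivers the contradiction cleanly.
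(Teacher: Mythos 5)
Your proposal is correct and follows essentially the same route as the paper: for (i)$\Rightarrow$(ii) the paper merely inlines the derivation of abstract simplicity (any non-trivial normal subgroup of $G^\dagger$ is dense and subnormal in $G$, hence equals $G^\dagger$ by Theorem~\ref{thm:condensenorm}) that underlies Corollary~\ref{cor:Titscore:abssimple}, which you legitimately cite instead. Your (ii)$\Rightarrow$(i) argument---splitting on $N \cap G^\dagger$ via abstract simplicity, then using $[N,G^\dagger] \le N \cap G^\dagger = 1$ together with closedness of centralisers and $\Z(G)=1$ to exclude the trivial-intersection case---is exactly the paper's.
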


\begin{proof}Suppose that $G$ is topologically simple and that $\con(g) \not=1$ for some $g \in G$.  Evidently $G$ is non-abelian, so $\Z(G)$ is a proper closed normal subgroup of $G$, hence $\Z(G)=1$.  Moreover, $G^\dagger$ is a non-trivial normal subgroup of $G$, and in fact $G^\dagger$ must be dense in $G$ by topological simplicity.  Let $N$ be a non-trivial normal subgroup of $G^\dagger$.  Then $\N_G(\overline{N}) \ge \overline{(G^\dagger)}=G$, since the normaliser of any closed subgroup of $G$ is closed; thus $N$ is a dense subnormal subgroup of $G$, and thus $N = G^\dagger$ by Theorem~\ref{thm:condensenorm}.  Hence $G^\dagger$ is abstractly simple.

Conversely, suppose that $G^\dagger$ is abstractly simple and dense in $G$, and that $\Z(G)=1$.  Then in particular $G^\dagger$ is non-trivial, so $\con(g) \not=1$ for some $g \in G$.  Moreover, given a closed normal subgroup $N$ of $G$, then either $N \cap G^\dagger = 1$ or $N \ge G^\dagger$, and in the latter case $N=G$.  But if $N \cap G^\dagger = 1$, then $G^\dagger \le \CC_G(N)$, so in fact $N$ is central in $G$, using the fact that the centraliser of a subgroup is closed, and hence $N$ is trivial by the assumption that $\Z(G)=1$.  Hence $G$ is topologically simple.\end{proof}


\begin{thebibliography}{99}

\addcontentsline{toc}{section}{\protect\numberline{}Bibliography}

\bibitem{BaumgartnerWillis}
U. Baumgartner, G. A. Willis, Contraction groups and scales of automorphisms of totally disconnected locally compact groups,  
Israel J. Math. 142 (2004), 221--248. 


\bibitem{Jaworski}
W. Jaworski, 
On contraction groups of automorphisms of totally disconnected locally compact groups. 
Israel J. Math. 172 (2009), 1--8. 

\bibitem{Marquis}
T. Marquis, Abstract simplicity of locally compact Kac--Moody groups. Preprint (2013),  \url{http://arxiv.org/abs/1301.3681}.

\bibitem{Prasad}
G. Prasad, 
Strong approximation for semi-simple groups over function fields.
Ann. of Math. (2) 105 (1977), no. 3, 553--572. 

\bibitem{Tits64}
J. Tits, Algebraic and abstract simple groups.
Ann. of Math. (2) 80 (1964), 313--329. 

\bibitem{Tits89}
J. Tits, Groupes associ\'es aux alg\`ebres de Kac--Moody, Ast\'erisque (1989), no. 177--178, Exp. No. 700, 7--31, S\'eminaire Bourbaki, Vol. 1988/89.

\bibitem{Willis94}
G. A. Willis, The structure of totally disconnected, locally compact groups.
Math. Ann. 300 (1994), no. 2, 341--363. 

\bibitem{Willis01}
G. A. Willis,  Further properties of the scale function on a totally disconnected group.  
J. Algebra 237 (2001), no. 1, 142--164. 

\bibitem{Willis}
G. A. Willis, Compact open subgroups in simple totally disconnected groups, J. Algebra 312 (2007), no. 1, 405--417.

\bibitem{WilNub}
G. A. Willis, The nub of an automorphism of a totally disconnected, locally compact group, Ergodic Theory Dynam. Systems (to appear), preprint available at \url{http://arxiv.org/1112.4239}.

\end{thebibliography}
\end{document}